\documentclass{article}
\usepackage{lmodern}
\usepackage[utf8]{inputenc}
\usepackage{latexcolors}  
\usepackage{amsmath, amsthm, amssymb, hyperref}
\usepackage[margin=1in]{geometry}
\usepackage{enumerate}
\usepackage[shortlabels]{enumitem}
\usepackage{mathrsfs}
\usepackage{mathtools}
\usepackage{multicol}
\usepackage{graphicx, float} 
\usepackage[capitalise, noabbrev]{cleveref}
\usepackage{venndiagram}
\usepackage{tikz-cd}
\usepackage[normalem]{ulem}
\usepackage{verbatim}
\usepackage{caption}
\usepackage{subcaption}

\usepackage{todonotes}

\newcommand{\qif}{\quad \mbox{if} \quad}

\DeclareMathOperator{\conv}{conv}

\newtheorem{theorem}{Theorem}[section]

\newtheorem{corollary}[theorem]{Corollary}
\newtheorem{lemma}[theorem]{Lemma}
\theoremstyle{definition}
\newtheorem{definition}[theorem]{Definition}
\newtheorem{example}[theorem]{Example}
\newtheorem{remark}[theorem]{Remark}

\newcommand{\Cos}{\mathcal{C}}
\newcommand{\DuCos}{\mathcal{C}^\circ}
\newcommand{\x}{\mathbf{x}}
\newcommand{\y}{\mathbf{y}}
\newcommand{\R}{\mathbb{R}}
\newcommand{\p}{\mathbf{p}}

\title{On cosmological polytopes, their canonical forms and their duals}
\author{Anna Birkemeyer \thanks{Universit\"at Bonn, Germany. \texttt{anna.birkemeyer@gmx.de}}
\and
Torben Donzelmann \thanks{Universit\"at Osnabr\"uck, Germany. \texttt{torben.donzelmann@uos.de}}
\and Mieke Fink \thanks{Universit\"at Osnabr\"uck, Germany. \texttt{mieke.fink@uos.de}} 
\and 
Martina Juhnke \thanks{Universit\"at Osnabr\"uck, Germany. \texttt{martina.juhnke@uos.de}}}

\begin{document}
\maketitle
\begin{abstract}
    We compute the canonical form of the cosmological polytope for any graph in terms of the dual of the shifted cosmological polytope in two different ways. On the way, we provide  an explicit coordinate description of the dual of the cosmological polytope. Moreover, we construct two  triangulations of the dual cosmological polytope in terms of maximal and almost maximal tubings of the underlying graph. Though the existence of the first triangulation was already suggested by Arkani-Hamed, Benincasa and Postnikov, the second is completely new and, in particular, gives rise to a new expression of the canonical form of the cosmological polytope.
\end{abstract}
\section{Introduction}

The cosmological polytope is a combinatorial object of interest in particle physics. It was first defined by Arkani-Hamed, Benincasa, and Postnikov \cite{arkani17} in their study of the wave function of the universe. The integrands appearing in this wave function correspond to Feynman diagrams, which in our context are simply graphs. To each such graph $G$, they associate a polytope $\Cos_G$, the cosmological polytope, whose \emph{canonical form} coincides exactly with the corresponding integrand. This differential form is a fundamental object in the theory of positive geometries, and there are various methods known to compute it. Replacing integrands by canonical forms hence allows one to apply tools from positive geometry to the study of the wave function. A common approach to compute the canonical form of a polytope is  via triangulations of the polytopes itself, and this has been pursued also for cosmological polytopes, see \cite{JSV} and also \cite{benjes2025}.  We also refer the reader to a recent preprint by Frost and Lotter, where a generalization of the wave function of the universe is studied \cite{FL}.

In this paper, we take a different route via the volume of the dual of the shifted polytope. More precisely, the following statement  serves as our starting point (\cite[Theorem 5.3]{gao2024}, see also \cite[Section 7.4.1]{ArkaniLam17} and \cite[Theorem 2.5]{Telen}): 

\begin{theorem}\label{canonical}\cite{gao2024}
Let $P$ be a $d$-dimensional polytope in $\mathbb{R}^d \subseteq \mathbb{P}^d$. Then the canonical form of $P$ is given by
\[
\Omega(P)(x) = \operatorname{vol}\bigl((P-x)^\circ\bigr)\,\mathrm{d}(x),
\label{expression_can}
\]
where $(P-x)^\circ$ denotes the dual of the shifted polytope $P-x$.
\end{theorem}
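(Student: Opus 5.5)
The plan is to reduce to simplices via triangulation and additivity, then verify the formula on a simplex by direct computation.

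We use the standard characterization of the canonical form of a polytope $P$ (for $x$ in the interior of $P$). It is the unique rational top form with simple poles exactly along the facet hyperplanes of $P$ whose residues along each facet are recursively the canonical forms of the facets. It is additive under subdivisions: if $P=\bigcup_i P_i$ is a triangulation, then $\Omega(P)=\sum_i \Omega(P_i)$. For a $d$-simplex with facet inequalities $\ell_i(x)=a_i\cdot x+b_i\ge 0$, $i=0,\dots,d$, one has
\[
\Omega(\Delta)(x)=\frac{|\det(\tilde a_0,\dots,\tilde a_d)|}{d!\,\prod_{i=0}^d \ell_i(x)}\,\mathrm{d}x,\qquad \tilde a_i=(b_i,a_i)\in\mathbb{R}^{d+1}.
\]

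\textbf{Step 1 (simplex case).} Fix $x$ in the interior of $\Delta$. Then $\Delta-x=\{y: a_i\cdot y+\ell_i(x)\ge 0\}=\{y: (-a_i/\ell_i(x))\cdot y\le 1\}$. Hence $(\Delta-x)^\circ$ is the simplex with vertices $v_i=-a_i/\ell_i(x)$. Its volume is $\frac{1}{d!}|\det(v_1-v_0,\dots,v_d-v_0)|$, which equals $\frac1{d!}$ times the absolute value of the $(d+1)\times(d+1)$ determinant with columns $(1,v_i)$. Pulling out the factors $1/\ell_i(x)$ turns the columns into $(\ell_i(x),-a_i)$. Subtracting $x$-multiples of the lower rows from the first row (a unimodular operation) turns the first entries into $b_i$. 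The result is $|\det(\tilde a_i)|/(d!\prod_i\ell_i(x))$, which matches the formula above.

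\textbf{Step 2 (additivity of the dual volume).} It remains to show that $x\mapsto\operatorname{vol}((P-x)^\circ)$, viewed as a rational function, is additive under triangulations of $P$. This is the main obstacle, because polar duality does not turn unions into unions. I would use the integral representation
\[
\operatorname{vol}(Q^\circ)=\frac{1}{d!}\int_{\mathbb{R}^d}e^{-h_Q(u)}\,du,\qquad h_Q(u)=\max_{y\in Q}\langle u,y\rangle .
\]
This by itself is not additive either. A better route is the Laplace-transform or Lawrence-type identity $\operatorname{vol}(Q^\circ)=\frac{1}{d!}\int_{\mathbb{R}^d}e^{-h_Q}$, combined with Brion's theorem or inclusion–exclusion of indicator functions. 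Alternatively, I would use the projective formula
\[
\operatorname{vol}((P-x)^\circ)=\frac{1}{d!}\int_{\hat P^\vee}\frac{\mathrm{d}\mu(y)}{(y\cdot\hat x)^{d+1}},
\]
which expresses the volume as an integral over the dual cone $\hat P^\vee$ of the homogenized polytope. The dual cones satisfy $\mathbf 1_{\hat P^\vee}\equiv\sum_i \mathbf 1_{\hat P_i^\vee}$ modulo indicators of cones containing lines, by polarity of the valuation $[\hat P]=\sum[\hat P_i]$ on interior faces. Integrals of the homogeneous function $(y\cdot \hat x)^{-d-1}$ over line-containing cones vanish after meromorphic continuation. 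This yields additivity as an identity of rational functions in $x$.

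\textbf{Step 3 (conclusion).} Triangulate $P$ without new vertices. Apply Step 2 to write $\operatorname{vol}((P-x)^\circ)=\sum_i\operatorname{vol}((P_i-x)^\circ)$ as rational functions, although for $x\notin P_i$ the right-hand side must be read as a signed, continued value. Then apply Step 1 to each simplex and additivity of $\Omega$ to obtain $\Omega(P)(x)=\operatorname{vol}((P-x)^\circ)\,\mathrm{d}x$ on the interior of $P$, hence everywhere as rational forms.
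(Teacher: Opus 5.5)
Your proposal takes essentially the same route as the paper. Valuativity of $P\mapsto\Omega(P)$ and of $P\mapsto\operatorname{vol}((P-x)^\circ)$ reduces the claim to a simplex, and your Step~1 is the same determinant manipulation as in Lemma~\ref{lemma:dual}. Your Step~2 goes further than the paper: it sketches, via dual cones, Lawrence--Varchenko polarity and vanishing on line-containing cones, the valuativity of the dual volume that the paper only cites from \cite{ArkaniLam17,gao2024}.

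One caveat concerns normalization. Under the residue convention you invoke, the simplex form has no $1/d!$, so your displayed formula for $\Omega(\Delta)$ is off by a factor $d!$. The identity therefore holds for normalized volume, or up to the factor $d!$; the paper glosses over this constant too.
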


We will use this result to study the canonical form of the cosmological polytope in terms of the volume of its dual polytope. The terms appearing in our computations will correspond naturally to combinatorial structures associated with the graph $G$. 
A technical difficulty arises from the fact that the cosmological polytope of a graph $G=(V,E)$ naturally lives in a real vector space of dimension $|E|+|V|$, but has codimension one. Consequently, it must be regarded as a polytope in an appropriate codimension-one subspace in order to define its polar dual. This requires careful treatment to ensure that the construction is geometrically valid—so that Theorem~\ref{canonical} applies—while simultaneously still referencing the combinatorial structure of the graph in its coordinates.

We provide such a description of the dual polytope, which makes this structure explicit and prove that our definition indeed yields the polar dual of the cosmological polytope.

As vertices of the polar dual of a polytope correspond to facets of the original polytope, knowing the facet description of the polytope $\Cos_G$ is crucial for us. 
Indeed, it was shown in \cite{arkani17}, that facets naturally correspond to connected subgraphs of the underlying graph.  

More concretely, for any (not necessarily induced) connected subgraph $t=(V(t),E(t))$ of $G=(V,E)$, the vector
\[
h_t = \sum_{v\in V(t)} \mathbf{x}_v
      + \sum_{e\in E\setminus E(t)} |e\cap V(t)| \cdot \mathbf{y}_e
      \;\in\; \mathbb{P}^{|E|+|V|-1},
\]
is a facet normal of $\Cos_G$. Here, we we denote the standard basis vectors by $\mathbf{x}_v$ for $v\in V$ and $\mathbf{y}_e$ for $e\in E$. 

To compute the volume of a polytope, a natural approach is to determine a triangulation. In \cite{arkani17},~--~without proof~--~ the authors suggested that the dual of the cosmological polytope admits a triangulation in terms of \emph{tubings} of a graph, where, in our nomenclature, a tubing is a collection of connected subgraphs that is totally ordered by inclusion. More precisely, they claimed the following statement: 

\begin{theorem}\label{t:tubingTrig}
Let $G$ be a graph with maximal tubings $T_1,\dots,T_s$, and let
\[
\sigma_i = \conv\left(\frac{h_t}{|h_t|_1}~ |~ t \in T_i \right).
\]
Then $\{ \sigma_1,\dots,\sigma_s \}$ is a triangulation of $\DuCos_G$.
\end{theorem}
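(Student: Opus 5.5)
The plan is to pass from the polytope $\DuCos_G$ to its cone over the origin and prove the statement at the level of cones. Let $K=\operatorname{cone}(h_t : t \text{ connected subgraph})$. Since the $h_t$ are the facet normals of $\Cos_G$ (as recalled from \cite{arkani17}), $K$ is the dual cone of the cone over $\Cos_G$. It is therefore cut out by the inequalities $\langle w,p\rangle\ge 0$ for the vertices $p$ of $\Cos_G$. For an edge $e=uv$ these are
\[
w_u+w_v-w_e\ge 0,\qquad w_u-w_v+w_e\ge 0,\qquad -w_u+w_v+w_e\ge 0 .
\]
$\DuCos_G$ is the slice of $K$ by the affine hyperplane on which each normalized vector $h_t/|h_t|_1$ lies. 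So it suffices to show two things: the cones $\operatorname{cone}(h_t : t\in T_i)$ are simplicial and full-dimensional, and every $w$ in the interior of $K$ lies in exactly one of them (together with the usual statement about common faces).

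\textbf{Existence of a decomposition (layer-cake).} Given $w\in K$, define thresholds $s_v=w_v$ for each $v\in V$ and $s_e=\tfrac12(w_u+w_v-w_e)$ for each $e=uv\in E$. The three inequalities above say exactly that $0\le s_e\le\min(w_u,w_v)$. For each level $s>0$, let $t_s$ be the subgraph with vertex set $\{v: w_v\ge s\}$ and edge set $\{e: s_e\ge s\}$; it is a genuine subgraph by the bound on $s_e$.

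The key computation is that
\[
w=\int_0^\infty h_{t_s}\,ds,
\]
where $h$ of a disconnected subgraph means the sum of $h$ over its components. This holds because $h$ is additive over vertex-disjoint pieces. To check it:
\begin{itemize}
\item the $\mathbf{x}_v$-coordinate on the right is the measure of $\{s\le w_v\}$;
\item for $e=uv$ with $w_u\ge w_v$, the $\mathbf{y}_e$-coordinate is $(w_u-w_v)+2(w_v-s_e)=w_e$.
\end{itemize}
Since $s\mapsto t_s$ is piecewise constant, this writes $w$ as a nonnegative combination of $h_t$ over the connected components $t$ of the finitely many distinct $t_s$. Any two such components are either nested or vertex-disjoint. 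I then check that this family is contained in a maximal tubing in the sense of the paper, extending it greedily by inserting intermediate connected subgraphs with coefficient $0$.

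\textbf{Uniqueness and simpliciality.} This is the step I expect to be the main obstacle. First I will show that each maximal tubing has $|V|+|E|$ members whose vectors $h_t$ are linearly independent. The approach is induction along the tubing: each step adds a single vertex or a single edge, and the new $h_t$ introduces a coordinate $\mathbf{x}_v$ or $\mathbf{y}_e$ in a triangular pattern. Hence each $\sigma_i$ is a full-dimensional simplex.

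For uniqueness, suppose $w=\sum_{t\in T}\lambda_t h_t$ with $\lambda_t>0$ and $T$ a tubing. Reading off coordinates, the function $s\mapsto\bigcup\{t : \sum_{t'\supseteq t}\lambda_{t'}\ge s\}$ must reproduce the level sets above, because the $\mathbf{x}$- and $\mathbf{y}$-coordinates of $w$ determine the thresholds $s_v,s_e$. So the support of the representation is forced. This gives that the relative interiors of the cones are disjoint and that their union is $K$.

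Together with the fact that the faces of the simplices are spanned by subtubings, this yields that $\{\sigma_1,\dots,\sigma_s\}$ is a triangulation. The delicate points are making the additivity of $h$ over components compatible with the paper's precise definition of tubing, and handling ties among thresholds on the boundary of $K$.
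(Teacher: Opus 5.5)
Your argument is correct, and it takes a genuinely different route from the paper.

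The paper works locally on $\DuCos_G$. It shows that $\sigma_i\cap\sigma_j$ is a common face by building an explicit separating hyperplane from the functionals $a_{sr}$ of intersecting pairs $s\in T_i$, $r\in T_j$. It then gets the covering from the pseudomanifold criterion of Lemma~\ref{l:trig2Facets}, after a ridge case analysis: dropping a singleton or $G$ gives a ridge in a facet of $\DuCos_G$, and dropping any other tube admits exactly one flip.

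You work globally on the cone. The fact driving everything, worth stating explicitly, is this: in the coordinates $a_v=w_v$, $b_e=\tfrac12(w_u+w_v-w_e)$, each $h_t$ becomes the $0/1$-indicator of $V(t)\cup E(t)$, and $K$ becomes $\{0\le b_e\le\min(a_u,a_v)\}$. The layer-cake formula then gives the covering. Your uniqueness claim gives the face-to-face property; you rightly state it for arbitrary tubings, not only maximal ones. The reason it holds is that a tube with $\lambda_t>0$ is a connected component of the level set at height $\Lambda(t)=\sum_{t'\supseteq t}\lambda_{t'}$, because strict supertubes have strictly smaller $\Lambda$. Writing $\operatorname{cone}(T)$ for $\operatorname{cone}(h_t : t\in T)$, this yields $\operatorname{cone}(T_i)\cap\operatorname{cone}(T_j)=\operatorname{cone}(T_i\cap T_j)$, not merely disjoint interiors.

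The same coordinates make your full-dimensionality step clean. Pair each tube with the vertex or edge it introduces; the matrix of the $h_t$, $t\in T_i$, is then the unitriangular zeta matrix of inclusion. In the original coordinates, $h_t$ has $\mathbf{y}_e$-entry $0$ at the edge $e$ it introduces, so there your ``triangular pattern'' must be read through $h_t-h_{s_1}-h_{s_2}=-2\mathbf{y}_e$. Lemma~\ref{l:trig2Facets} presupposes this nondegeneracy, and the paper does not check it explicitly.

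The two delicate points you flag are harmless. The formula for $h_t$ is additive over vertex-disjoint pieces, and ties among thresholds only produce zero coefficients.

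What your route buys:
\begin{itemize}
\item independence from the topological lemma and the ridge case analysis;
\item an explicit carrier simplex for every point;
\item a direct proof that the $h_t$ cut out $\Cos_G$, since the layer cake shows $\{w:\langle w,p\rangle\ge 0\}\subseteq\operatorname{cone}(h_t)$ without the cited facet description.
\end{itemize}
What the paper's route buys is the explicit identification of the boundary ridges with the tubings obtained by deleting a singleton or $G$. This is used for Lemma~\ref{l:newTriang}, and your argument would have to establish it separately.
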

We provide a mathematical proof of this result. 
Moreover, by restricting the triangulation in \Cref{t:tubingTrig} and coning over any interior point, we obtain another  triangulation of $\DuCos_G$ in terms of certain \emph{almost maximal tubings}. 

Both triangulations allow us to compute the canonical form of the cosmological polytope, since the combinatorial type of the triangulation of the dual of a polytope is invariant under translation. 

We want to remark that, though the seminal paper by Arkani-Hamed, Benincasa and Postnikov \cite{arkani17} has triggered a lot of research on the cosmological polytope itself (see \cite{KM2025} for results on their face structure, and  \cite{JSV,bruckamp2025,benjes2025} for the study of Ehrhart theoretic properties), their duals have not been considered so far.

The structure of the paper is as follows.
In the next section, we introduce the cosmological polytope and recall known results about its geometry. We also define tubings of a graph in our setting, as this notion plays a central role in describing the triangulation constructed later. We then explain how to compute the shifted dual volume appearing in \Cref{canonical} using a triangulation of $P^\circ$.

In \Cref{sect:triang}, we provide a convenient definition of the dual cosmological polytope $\DuCos_G$ and prove \Cref{t:tubingTrig}. Finally, in  \Cref{section_comp}, we use the results of \Cref{sect:triang} to compute the canonical form of the cosmological polytope in  \Cref{canonicalform}. We show that the induced triangulation on the boundary leads to yet another triangulation of the dual of the cosmological polytope and, hence, yet another representation of the canonical form of $\Cos_G$.

Shortly before publishing this article, we were informed that \Cref{t:tubingTrig} was proven independently and using different methods in \cite{Tyler}. 

\section{Preliminaries}
In this section, we provide the necessary background on graph tubings, cosmological polytopes, as well as on canonical forms and volumes of duals of polytopes.

\subsection{Tubings}

In this subsection, we define the notion of a tubing. This will be helpful to construct the triangulation of the polytope $\DuCos_G$ in \Cref{t:tubingTrig}.

Given a graph $G=(V,E)$, we call connected (not necessarily induced) non-empty subgraphs of $G$ \emph{tubes}, and we denote by $\mathcal{Z}(G)$ the set of tubes of $G$. For a tube $t\in \mathcal{Z}(G)$, we write $V(t)$ and $E(t)$ for its set of vertices and edges, respectively. A tube $t$ is called a \emph{singleton}, if $t$ consists of an isolated vertex, i.e., $V(t)=\{v\}$ for some $v\in V$ and $E(t)=\emptyset$.  
The following definition is of particular importance for us.
\begin{definition}\label{def:tubing}
Let $G=(V,E)$ be a graph. 
    A set $T\subseteq \mathcal{Z}(G)$ is called a \emph{tubing} of $G$ is  if for all $t_1,t_2\in T$ either
    \begin{enumerate}
        \item[(1)] $t_1$ and $t_2$ are \emph{disjoint}, i.e.,  $V(t_1)\cap V(t_2)=\emptyset$, or
        \item[(2)] $t_1$ and $t_2$ are \emph{nested}, i.e., $E(t_1)\subseteq E(t_2)$ or $E(t_2)\subseteq E(t_1)$.
    \end{enumerate} 
    A tubing $T$ is called \emph{maximal} if it is maximal with respect to inclusion.
\end{definition}
 It is not difficult to see that every maximal tubing of $G$ has cardinality $|E|+|V|$. Moreover, since tubes are connected, condition (2) implies that we also have $V(t_1)\subseteq V(t_2)$ or vice versa for all $t_1,t_2\in T$ that satisfy (2). We will also write $t_1\subseteq t_2$ for the condition in (2). We say that two tubes \emph{intersect} each other if none of the two conditions above apply. We denote by $\tau(G)$ the number of maximal tubings of $G$.

\begin{remark}[Different tubing definitions]
    In the literature,  there exist a different notion of a \emph{tubing} than the one we are using, see e.g.,\ \cite{MP2016}. 
    As this notion is connected to graph associahedra, we will call these tubings \emph{GA-tubings}. Before providing the precise definition, we want to recall that a subgraph $H$ of $G=(V,E)$ is called \emph{induced} if $H=(V',\{e\in E~|~e\subseteq V'\})$ for some set $V'\subseteq V$.  
    With this, a \emph{GA-tubing} of $G=(V,E)$ is a collection $T$ of connected induced subgraphs of $G$ that is a tubing in the sense of \Cref{def:tubing} such that for all $t_1, t_2\in T$ with $V(t_1)\cap V(t_2)=\emptyset$, we have 
    \begin{itemize}
        \item[(3)] $E\cap \{v_1v_2 ~|~ v_1\in V(t_1), v_2\in V(t_2)\}=\emptyset$.
    \end{itemize}
    The latter condition means that disjoint tubes have to be non-adjacent.
\end{remark}

Given a graph $G=(V,E)$, its \emph{line graph} is the graph on the vertex set $E$, where two vertices in $L(G)$ are adjacent if the corresponding edges in $G$ are incident. 
Tubings and GA-tubings are related in the following way.

\begin{lemma}
    Let $G$ be a graph and $L(G)$ its line graph. Then there exists a bijection between the tubings of $G$ that contain each singleton and the GA-tubings of $L(G)$.
\end{lemma}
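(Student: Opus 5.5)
The plan is to write down an explicit map in both directions and check that the tubing conditions translate exactly. The first observation is about tubes with no edges. A tube $t$ with $E(t)=\emptyset$ is connected and non-empty, so it must be a singleton. If instead $E(t)\neq\emptyset$, connectedness forces $V(t)=\bigcup_{e\in E(t)} e$, so $t$ is determined by its edge set. Moreover, a non-empty set $S\subseteq E$ is the edge set of a connected subgraph of $G$ if and only if $S$ induces a connected subgraph of $L(G)$: a walk in $G$ through the edges of $S$ is the same as a chain of pairwise incident edges of $S$.

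This gives the maps. Given a tubing $T$ of $G$ containing every singleton, set
\[
\Phi(T)=\{\,L(G)[E(t)] \;:\; t\in T,\ E(t)\neq\emptyset\,\}.
\]
Here each member is a connected induced subgraph of $L(G)$. Conversely, for a GA-tubing $T'$ of $L(G)$, define $\Psi(T')$ to consist of the tubes $t_S=(\bigcup_{e\in S}e,\,S)$, where $S$ runs over the vertex sets of the members of $T'$, together with all singletons of $G$. By the observation above, $\Phi$ and $\Psi$ are mutually inverse on the level of sets of tubes. The singletons are exactly the part forgotten by $\Phi$ and re-added by $\Psi$.

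It remains to check that both maps preserve the defining conditions. I expect this to be the main, though not deep, step. Take two non-singleton tubes $t_1,t_2$ with edge sets $S_1,S_2$.

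\begin{itemize}
\item Nestedness in $G$ means $E(t_1)\subseteq E(t_2)$ or the reverse. This is literally nestedness of the induced subgraphs $L(G)[S_1]$ and $L(G)[S_2]$.
\item Vertex-disjointness $V(t_1)\cap V(t_2)=\emptyset$ holds if and only if no edge of $S_1$ shares an endpoint with an edge of $S_2$. This in turn holds if and only if $S_1\cap S_2=\emptyset$ and there is no edge of $L(G)$ between $S_1$ and $S_2$, which is precisely disjointness together with condition (3) in $L(G)$.
\end{itemize}

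Hence a pair is compatible in $G$ if and only if its image pair is GA-compatible in $L(G)$. The one subtle point is that "disjoint but adjacent" in $L(G)$ corresponds to two tubes in $G$ that intersect, and so is correctly excluded on both sides.

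Finally, we must check that adding all singletons never breaks the tubing property in $G$. Let $\{v\}$ be a singleton and $t$ any tube. If $v\notin V(t)$, the two are disjoint. If $v\in V(t)$, then $E(\{v\})=\emptyset\subseteq E(t)$, so they are nested. Two distinct singletons are disjoint. Thus $\Psi(T')$ is a tubing containing all singletons, and $\Phi$ and $\Psi$ are inverse bijections.
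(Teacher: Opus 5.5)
Your proof is correct and takes essentially the same route as the paper: it identifies a non-singleton tube $t$ with the induced subgraph $L(G)[E(t)]=L(t)$, and shows that nestedness in $G$ corresponds to nestedness in $L(G)$, while vertex-disjointness corresponds to being disjoint and non-adjacent. Your version is more complete than the paper's. You prove the compatibility correspondence as an equivalence, write down the inverse $\Psi$, and check that singletons are compatible with every tube, so you also show that every GA-tubing of $L(G)$ comes from a tubing of $G$, a direction the paper's proof does not spell out.
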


\begin{proof}
To simplify notation, we let $\mathcal{I}(L(G))$ denote the set of vertex-induced connected subgraphs of $L(G)$ and we write $\widetilde{\mathcal{Z}}(G)$ for the set of tubes of $G$ that are not singletons.  
    Since every tube $t\in \widetilde{\mathcal{Z}}(G)$ is determined by its set of edges $E(t)$ and every tube in $s\in\mathcal{I}(L(G))$ by its set of vertices $V(s)$ the map $\phi: \widetilde{\mathcal{Z}}(G)\to \mathcal{I}(L(G))$ that sends $t$ to $L(t)$ is easily seen to be a bijection. 
    Let  $\Phi$ be its natural extension to tubings of $G$, defined by $\Phi(T)=\{\phi(t)~|~ t \in T\}$ for $T\in\widetilde{\mathcal{Z}}(G)$. Since $\phi$ is bijective, it follows that $\Phi$ is injective, and, in particular bijective on its image. 
    
    To show the claim, we hence need to show that the image of $\Phi$ equals the set of GA-tubings of $L(G)$, i.e., we need to prove that  $\Phi(T)$ is a GA-tubing of $L(G)$, if  $T\in \widetilde{\mathcal{Z}}(G)$.

    Let $T\in \widetilde{\mathcal{Z}}(G)$ and let $t_1,t_2\in T$.  If $E(t_1)\subset E(t_2)$, then $V(\phi(t_1))\subseteq V(\phi(t_2))$, by definition of $\phi$, and, since $\phi(t_1)$ and $\phi(t_2)$ are are induced subgraphs, it follows that they are nested as well. 
    If $t_1,t_2\in T$ are disjoint, then $E(t_1)\cap E(t_2)=\emptyset$. This implies that $\phi(t_1),\phi(t_2)$ are disjoint. If $\phi(t_1)$ and $\phi(t_2)$ were adjacent, there would exist an edge $f=v_1v_2\in L(G)$ such that $v_1\in \phi(t_1)$ and $v_2\in \phi(t_2)$. This means, however, that $V(t_1)\cap V(t_2)\neq \emptyset$, which is a contradiction and it follows that $\phi(t_1)$ and $\phi(t_2)$ are non-adjacent. 
    The claim follows. 
\end{proof}

We will now state a few properties of tubings that will be useful later.

\begin{lemma}\label{l:properties_max_tubing}
    Let $G=(V,E)$ be a graph, $T\subseteq\mathcal{Z}(G)$ be a maximal tubing and $t\in T$.
    \begin{enumerate}
     \item[(1)] If $t\neq G$, then there exists a unique successor, i.e., a minimal supertube $\hat{t}\in T$ with $t\subset \hat{t}$.
     \item[(2)] If $t$ is not a singleton, then it has either one or two maximal predecessors, i.e., maximal subtubes $s_1,s_2\subset t$ with $s_1,s_2\in T$. In the first case, $V(t)=V(s_1)$ and $t$ is obtained from $s_1$ by adding a single edge $e$. In the second case $t$ is the union of $s_1$, $s_2$ and an edge $e$ connecting $s_1$ and $s_2$.
     \item[(3)] If $t$ is not a singleton, there exists  a unique edge $e$ such that for all $\tau \in T$ we have $e\in \tau$ if and only if $t\subseteq\tau$. We say that $t$ \emph{introduces} $e$.
    \end{enumerate}
\end{lemma}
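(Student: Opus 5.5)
The plan is to first record two structural facts about a maximal tubing $T$, and then derive (1)--(3) from a single ``insert a tube'' argument that contradicts maximality.

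First, $T$ contains every singleton. A singleton $\{v\}$ has $E(\{v\})=\emptyset$, so it is nested with every tube containing $v$ and disjoint from every other tube. Hence it can always be added, and maximality forces it to lie in $T$. (I also assume, as the statement implicitly does, that $G$ is connected and $G\in T$; the same compatibility argument shows this.)

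Second, any two members of $T$ that share a vertex are nested. This is immediate from the definition, since such tubes are not disjoint.

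For (1), all tubes of $T$ strictly containing $t$ share the vertices of $t$. By the second fact they form a chain under inclusion. The chain is nonempty because $G\in T$ and $t\neq G$. Its minimal element is the unique successor $\hat t$.

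For (2), let $s_1,\dots,s_k$ be the maximal elements of $T$ strictly contained in $t$. Any two of them are disjoint, since if they were nested, one would not be maximal. Every vertex $v$ of $t$ lies in the singleton $\{v\}\in T$, and $\{v\}\subsetneq t$ because $t$ is not a singleton. Hence $V(t)=\bigsqcup_i V(s_i)$. Let $F=E(t)\setminus\bigcup_i E(s_i)$, and let $H$ be the multigraph obtained from $t$ by contracting each $s_i$ to a point. Then $H$ has $k$ vertices and edge set $F$, and it is connected because $t$ is connected. Moreover $F\neq\emptyset$: if $k=1$, then $t\neq s_1$; if $k\ge 2$, the disjoint union of the $s_i$ is disconnected.

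The key step, which I expect to be the main obstacle, is to show that $H$ consists of exactly one edge. Suppose not. Then $H$ has a connected subgraph $H'$ with at least one edge that is a proper subgraph of $H$. Take $t'$ to be the union of the $s_i$ corresponding to vertices of $H'$, together with the edges of $H'$. Then $t'$ is a tube, $t'\subsetneq t$, and $t'$ differs from every $s_i$ because it contains an edge of $F$.

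Next I check that $t'$ is compatible with every $\tau\in T$:
\begin{itemize}
\item if $\tau\supseteq t$, then $\tau\supseteq t'$;
\item if $\tau$ is disjoint from $t$, then $\tau$ is disjoint from $t'$;
\item if $\tau\subsetneq t$, then $\tau\subseteq s_i$ for some $i$, so $\tau$ is either nested in $t'$ or disjoint from it;
\item no $\tau\in T$ intersects $t$.
\end{itemize}
So $T\cup\{t'\}$ is a tubing strictly larger than $T$, which contradicts maximality. Hence $|F|=1$, say $F=\{e\}$, and $k\le 2$. If $k=1$, then $e$ has both endpoints in $s_1$ and $V(t)=V(s_1)$. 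If $k=2$, then $e$ connects $s_1$ and $s_2$.

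For (3), take the edge $e$ from (2). If $\tau\supseteq t$, then $e\in\tau$. Conversely, suppose $e\in\tau\in T$. Then $\tau$ meets $t$, so the two are nested. If $\tau\subsetneq t$, then $\tau$ lies in some maximal predecessor $s_i$, but $e\notin s_i$, a contradiction. Hence $t\subseteq\tau$.

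Uniqueness of $e$: any other edge $f$ of $t$ lies in some $s_i$, and $s_i\in T$ contains $f$ without containing $t$, so $f$ fails the condition. An edge $f$ not in $t$ fails the condition with $\tau=t$.
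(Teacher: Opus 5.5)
Your proof is correct and follows essentially the same route as the paper. Part (1) comes from the fact that tubes sharing a vertex are nested, part (2) from contradicting the maximality of $T$ by inserting a tube built from maximal predecessors plus one connecting edge, and part (3) is read off from the edge $e$ of (2). Your contracted multigraph $H$ handles the paper's ``three or more predecessors'' case and its unargued ``single added edge'' claim in one step, and you make explicit two points the paper leaves implicit: all singletons lie in $T$, so the predecessors partition $V(t)$, and the inserted tube is compatible with every member of $T$.
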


\begin{proof}
    To show (1), it suffices to observe that supertubes $\hat{t}_1,\hat{t}_2$ of $t$ have to either contain each other, which makes one non-minimal, or intersect in $t$, which contradicts the fact that $T$ is a tubing. 

    We now show (2). Let $s_1,s_2,\dots, s_\ell$ be maximal predecessors of $t$. Assume by contradiction that $\ell\geq 3$. Due to maximality, $s_1,\ldots,s_\ell$ have to be pairwise disjoint. Since $t$ is connected there has  to exist $2\leq j<\ell$ and an edge $f$ that is incident to a vertex in  $s_1$ and a vertex in $s_j$. The tube $s'$ that is the union of $s_1$ and $s_j$ with the edge $f$ is a predecessor of $t$ and it is different from $t$ since $\ell\geq 3$. 
    This contradicts the maximality of $s_1$ and $s_2$ and it follows that $t$ has one or two maximal predecessors. 
    If it has one maximal predecessor $s_1$, we know by maximality of the predecessor that $t$ is obtained from $s_1$ by adding a single edge (on the same vertex set). If $t$ has two  maximal predecessors $s_1 $ and $s_2$, then, since these have to be disjoint, we know that $t$ is the union of $s_1 $ and $s_2$ together with an edge incident to a vertex in $s_1$ and a vertex in $s_2$.

    Statement (3) directly follows by observing that the edge $e$ from statement (2) is contained in exactly those tubes $\tau\in T$ with $t\subseteq \tau$.  
\end{proof}

\subsection{Cosmological polytopes}\label{sect:basicsCosmo}
Given a graph $G=(V,E)$, we will denote the standard basis vectors in $\R^{|V|+|E|}$ by $\mathbf{x}_{v}, \mathbf{y}_e$ for $v\in V$ and $e\in E$. In \cite{arkani17}, the  \emph{cosmological polytope} $\Cos_G$ of $G$ was defined as follows:

\begin{definition}[Cosmological polytope \cite{arkani17}]
Let $G=(V,E)$ be a graph without isolated vertices. For each edge $e=\{v,w\}\in E$ consider the following three points in $\mathbb{R}^{|V|+| E|}$:
\[
\p_e \coloneqq \x_v+\x_w-\y_e, 
\qquad 
\p_{e,v} \coloneqq \x_v-\x_w+\y_e, 
\qquad 
\p_{e,w}\coloneqq -\x_v+\x_w+\y_e.
\]
The \emph{cosmological polytope} $\Cos_G$ of $G$ is defined as
\[
\Cos_G \coloneqq
\conv\left(
\p_e,\p_{e,v},\p_{e,w}~|~e=\{v,w\}\in E
\right)\subseteq\R^{|V|+|E|}.
\]
\end{definition}
It is easily seen that the set $\{\p_e,\p_{e,v},\p_{e,w}~|~e=\{v,w\} \in E\}$ is the set of vertices of $\Cos_G$, and we will write $V(\Cos_G)$ for this set. 

Moreover, by definition, $\Cos_G$ is contained in the affine hyperplane
\[
H =
\left\{
(x,y) \in \mathbb{R}^{|V|+|E|}
\;\middle|\;
\sum_{v\in V} x_v + \sum_{e\in E} y_e = 1
\right\},
\]
and has dimension $|V|+|E|-1$.
We now recall the facet description of $\Cos_G$, which was provided in \cite[Section 3.1]{arkani17}  (see also \cite[Theorem 2.1]{KM2025}). 

For any tube $t\in\mathcal{Z}(G)$, define the vector 
\begin{equation}\label{eq:normal}
h_t
=
\sum_{v\in V(t)} \x_v
+
\sum_{e\in E\setminus E(t)}
|e\cap V(t)|\, \y_e
\;\in\;
\mathbb{R}^{|V|+|E|}.
\end{equation}
An irredundant facet description of $\Cos_G$ is given by
\[
\Cos_G
=
\left\{
(x,y) \in H
\;\middle|\;
\langle (x,y),h_t\rangle \ge 0
\text{ for all } t\in\mathcal{Z}(G)
\right\}.
\]
In particular, the collection of vectors $\{h_t~|~t\in \mathcal{Z}(G)\}$ is a set of normal vectors of the facets of $\Cos_G$.

In \cite{KM2025}, Kühne and Monin proved the following characterization of the faces of $\Cos_G$, which will be important for us in the next section.

\begin{theorem}\cite[Theorem 3.1]{KM2025}\label{t:CosmoFaces}
Let $G=(V,E)$ be a graph.
A subset $X\subseteq V(\Cos_G)$ defines a face of $\Cos_G$ if and only if $X$ satisfies the following two conditions:
\begin{enumerate}
\item[(1)]
If for a vertex $v\in V$ the set $X$ contains both $\p_e$ and $\p_{e,v}$ for some edge $e=\{v,w\}\in E$, then $X$ contains $\p_{e'}$ and $\p_{e',v}$ for all edges $e'=\{v,w'\}\in E$.

\item[(2)]
If $X$ contains a subset
\[
\{\p_{e_1,v_1}, \p_{e_2,v_2}, \dots, \p_{e_k,v_k}\}
\]
corresponding to a cycle
\[
e_1=\{v_1,v_2\},\;
e_2=\{v_2,v_3\},\;
\dots,\;
e_k=\{v_k,v_1\}
\]
in $G$, then $X$ also contains the subset
\[
\{\p_{e_1,v_2}, \p_{e_2,v_3}, \dots, \p_{e_k,v_1}\}.
\]
\end{enumerate}
\end{theorem}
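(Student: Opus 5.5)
The plan is to use the facet description of $\Cos_G$ from \Cref{sect:basicsCosmo}. A subset $X\subseteq V(\Cos_G)$ is a face if and only if it is the full vertex set, or it equals the set of vertices lying on all facets that contain $X$. For a tube $t$ write $F_t=\{q\in V(\Cos_G)~|~\langle q,h_t\rangle=0\}$. The first step is a direct computation from \eqref{eq:normal}: for $e=\{v,w\}$ we get $\langle \p_e,h_t\rangle = 2$ if $e\in E(t)$ and $0$ otherwise. We also get $\langle \p_{e,v},h_t\rangle = 0$ if $e\in E(t)$ and $2\cdot[v\in V(t)]$ otherwise. Hence
\[
\p_e\in F_t \iff e\notin E(t), \qquad \p_{e,v}\in F_t \iff \bigl(e\in E(t)\ \text{or}\ v\notin V(t)\bigr).
\]
The whole theorem then becomes a purely combinatorial statement: $X$ is a face if and only if it is an intersection of sets $F_t$ (the empty intersection being allowed).

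For necessity, I first observe that conditions (1) and (2) are closure conditions of the form ``if certain elements lie in $X$, then certain others do''. Such conditions are preserved under intersections, and they hold trivially for $V(\Cos_G)$. It therefore suffices to check them for each $F_t$.

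For (1): $\p_e,\p_{e,v}\in F_t$ means $e\notin E(t)$ and $v\notin V(t)$. Then no edge $e'$ at $v$ lies in $E(t)$, so $\p_{e'},\p_{e',v}\in F_t$.

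For (2): suppose $\p_{e_i,v_i}\in F_t$ for all $i$, but $\p_{e_i,v_{i+1}}\notin F_t$ for some $i$. Then $e_i\notin E(t)$ and $v_{i+1}\in V(t)$. Since $\p_{e_{i+1},v_{i+1}}\in F_t$ and $v_{i+1}\in V(t)$, we get $e_{i+1}\in E(t)$, hence $v_{i+2}\in V(t)$. Propagating around the cycle forces $e_i\in E(t)$, which is a contradiction.

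For sufficiency, let $X\neq V(\Cos_G)$ satisfy (1) and (2). For every vertex $q\notin X$ I will construct a tube $t$ with $X\subseteq F_t$ and $q\notin F_t$. The construction is a forcing closure. The membership rule says that whenever $u\in V(t)$ and $\p_{f,u}\in X$ for an edge $f=\{u,u'\}$, we must have $f\in E(t)$, and hence $u'\in V(t)$. The rule $\p_f\in X$ forbids $f\in E(t)$.

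If $q=\p_{e,v}$, seed $t$ with the vertex $v$. If $q=\p_e$, seed $t$ with the edge $e$. In both cases, take the smallest subgraph containing the seed that is closed under the forcing rule. It is connected by construction. It remains to check that it violates none of the constraints coming from $X$:
\begin{enumerate}
\item[(a)] no edge $f$ with $\p_f\in X$ is added;
\item[(b)] in the case $q=\p_{e,v}$, the edge $e$ itself is not added, so that $q\notin F_t$.
\end{enumerate}

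This verification is the main obstacle. For (a), suppose the closure reaches a vertex $u$ through an edge $g$ and then adds an edge $f$ with $\p_f,\p_{f,u}\in X$. Then condition (1) gives $\p_g,\p_{g,u}\in X$ as well, so the obstruction can be pushed backwards along the forcing path. I would follow this path back to the seed. There I expect to derive either $q\in X$ (a contradiction), or a violation at the seed, which I would handle directly using (1).

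For (b), suppose $e=\{v,w\}$ gets added. This can only happen via $\p_{e,w}\in X$ after a forcing path from $v$ to $w$. That path, together with $e$, forms a cycle all of whose ``$\p_{\cdot,\text{tail}}$'' vertices lie in $X$. Condition (2) then yields $\p_{e,v}\in X$, contradicting $q\notin X$. A similar cycle argument should handle the case where the forcing path revisits vertices.

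Once these checks are done, $X=\bigcap_{t:\,X\subseteq F_t}F_t$, and so $X$ is a face.
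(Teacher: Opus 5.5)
The paper does not prove this statement; it quotes it from Kühne--Monin \cite{KM2025}. There is therefore no in-paper argument to compare against, and your proposal has to stand on its own. It does: it is a correct, self-contained derivation from the facet description of Section~\ref{sect:basicsCosmo}, which the paper likewise takes as given.

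The individual steps check out:
\begin{itemize}
\item Your evaluations are right: $\langle \p_e,h_t\rangle=2$ if $e\in E(t)$ and $0$ otherwise, and $\langle \p_{e,v},h_t\rangle=2$ exactly when $e\notin E(t)$ and $v\in V(t)$.
\item Conditions (1) and (2) are implications of the form ``if these vertices lie in $X$, so do those'', and such implications are stable under intersection. This makes necessity clean.
\item The forcing closure is the right object for sufficiency.
\end{itemize}

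Two steps are only sketched and should be written out.

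\emph{Step (a).} Record the closure as a first-reach tree. Each non-seed vertex $u$ of $t$ has a parent $u'$ and an edge $g=\{u',u\}$ with $\p_{g,u'}\in X$, through which $u$ entered $t$. Prove, by induction in order of addition, that no $u\in V(t)$ has an incident edge $f$ with $\p_f,\p_{f,u}\in X$:
\begin{itemize}
\item At a seed vertex, (1) would force $q\in X$.
\item At a non-seed vertex, (1) gives $\p_g\in X$. Together with $\p_{g,u'}\in X$, this is the same obstruction at the earlier vertex $u'$.
\end{itemize}
Every edge of $t$ other than a seed edge $e$ was forced from an endpoint in $V(t)$, and the seed edge satisfies $\p_e=q\notin X$. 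So (a) follows.

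\emph{Step (b).} Take the tree path from $v$ to $w$. It is simple, and it cannot contain $e$: the edge $e$ could only be the entering edge of $w$ if it had been forced from $v$, which would mean $\p_{e,v}=q\in X$. Hence this path together with $e$ is a genuine cycle. Condition (2) then applies verbatim and yields $\p_{e,v}\in X$, the desired contradiction. In particular, the case of a forcing path revisiting vertices, which you flagged, never arises.
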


\subsection{Canonical forms and volumes of dual polytopes}\label{sect:adjoint}
In this section, we will review the representation of the canonical form of a polytope as a volume. All statements relating the canonical form and the volume of polar duals were already sketched in \cite{ArkaniLam17}.
The canonical form is a concept from positive geometry. It is a differential form associated to certain semi-algebraic sets $X$, and it is defined recursively along the boundary of $X$. We will make the definition more explicit after having introduced some further notation.

In the following, let $P\subseteq \R^s$ be a $d$-dimensional polytope on vertex set $V(P)$. 
First recall that the \emph{polar dual} of $P$ is defined as 
\[
P^\circ\coloneqq\{x\in \R^s~|~\langle x,y\rangle\geq -1 \text{ for all } y\in P\}.
\]
If the origin $\mathbf{0}$ is contained in the interior of $P$, then $P^\circ$ is a polytope itself. Moreover, it is well-known that, in this case, vertices of $P$ correspond to facets of $P^\circ$ and vice versa. We refer the reader to \cite{Ziegler} for more background on polar duals and, more generally, polytope theory. 

Let $\Delta$ be any triangulation of $P$ using only vertices of $P$. Going back to Warren \cite{Warren1996}, the \emph{adjoint} of $P$ is the function 
$$  \displaystyle \operatorname{adj}(P)(x) = \displaystyle \sum_{\sigma \in \Delta} \operatorname{vol}(\sigma) \displaystyle \prod_{v \in V(P)\setminus  \sigma} \ell_v,$$ where $x=(x_1,\ldots,x_s)$ and, for a vertex $v\in V(P)$, the form $\ell_v$ is the (up to multiple) unique affine linear form that vanishes one the corresponding facet of the dual polytope $P^\circ$ of $P$. It has been shown that the adjoint is independent of the triangulation $\Delta$ of $P$. 

Polytopes are special cases of so-called \emph{positive geometries} \cite{lam2022}. Their canonical form are related to adjoints in the following way: For any $d$-dimensional projective polytope $P \subseteq \mathbb R^d \subseteq \mathbb P^d$, its \emph{canonical form} $\Omega( P)(x)$ is the following regular differential form
\begin{equation}
\Omega( P)(x)= \frac{\operatorname{adj}(P^\circ)(x)}{ \displaystyle\prod_{F \in \mathcal F} \ell_F(x)}\operatorname{d}(x)\label{denom},\end{equation} where the product in the denominator ranges over the set  $\mathcal{F}$  of facets of $P$ and the form $\ell_F$ is the (up to multiple unique) homogeneous linear form vanishing on the facet $F$ of $P$. The canonical form of $P$ has poles along the facet hyperplanes of $P$, and these poles are simple. For a gentle introduction to the topic, we refer the reader to \cite{lam2022}. 

We would like to  express the forms $\ell_F$ in the denominator of \eqref{denom} in terms of the polar dual of $P$.
In the following, assume that the origin $\mathbf{0}$ is contained in the interior of $P$. Moreover, let $s_1,\ldots,s_k$ be generators of the rays of the normal fan $\mathcal N (P)$ of $P$  such that
$$P = \{x \in \mathbb{R}^d \mid  \langle x, s_i \rangle \geq -1 \text{ for all } 1\leq i\leq  k  \}.$$ 
The vectors $s_i$ are exactly the vertices of the polar dual $P^\circ$ of $P$ and hence, the denominator of \eqref{denom} can be rewritten as $$\prod_{F \in \mathcal F} \ell_F(x) = \prod_{v \in V(P^\circ)} \ell_v(x),$$ where the product on the right-hand side ranges over the set of vertices $V(P^\circ)$ of $P^\circ$.  
By Theorem 5.3 in \cite{gao2024} the canonical form can be computed via the volume of the polar dual as follows 
\[
\operatorname{ vol } (P-x)^{\circ} \operatorname d(x) = \Omega(P)(x)\]
(see also \cite[Section 7.4.1]{ArkaniLam17} and \cite[Theorem 2.5]{Telen}). This shows that the rational form on the left, a priori only defined on the interior of the polytope, uniquely extends to a rational form on $\mathbb P^d$. 
Note that both assignments, $P \mapsto \operatorname{vol}(P-x)^\circ \operatorname{d}(x)$ and $P \mapsto \Omega(P)(x)$ are valuative assignements (\cite{ArkaniLam17},\cite{gao2024}) and therefore both forms agree for any polytope $P$ if we prove the statement for a simplex $ \sigma$, which we will show in the next lemma, suited for our setting.

Note that by the definition of the adjoint, we have
$$\Omega( \sigma)(x)= \frac{\operatorname{adj}(\sigma^\circ)(x)}{ \displaystyle\prod_{F \in \mathcal F} \ell_F(x)}\operatorname{d}(x) = \frac{\displaystyle \operatorname{vol}(\sigma^\circ)}{\displaystyle\prod_{v \in V(\sigma^\circ)} \ell_v(x)}.$$
In the following, we always assume that $x$ is in the interior $\mathrm{int}(P)$ of $P$ so that the functions we consider are well defined. The next statement provides a formula for the volume of the polar dual of a shifted simplex. 

\begin{lemma}\label{lemma:dual}
Let $\sigma$ be a $d$-simplex in $\mathbb R^d \subseteq \mathbb P^d$ with $\mathbf{0}\in \operatorname{int}(\sigma)$. For any $p \in \operatorname{int} (\sigma)$ we have 
    \begin{equation}
    \operatorname{ vol } (\sigma-p)^{\circ}= \frac{\operatorname{vol}(\sigma^\circ)}{\displaystyle \prod_{i=1}^{d+1}{\ell_{v_i}(p)}} \label{volume}.
    \end{equation}
where $v_1,\ldots,v_{d+1}$ are the vertices of the polar dual simplex of $\sigma$ and  $\ell_{v_i}(p) =  {1 + \langle v_i, p \rangle}$.
\end{lemma}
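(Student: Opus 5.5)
The plan is to compute $(\sigma-p)^\circ$ explicitly as a simplex and then compare two determinant formulas for simplex volumes. Since $\mathbf{0}\in\operatorname{int}(\sigma)$, the polar $\sigma^\circ$ is a $d$-simplex with vertices $v_1,\dots,v_{d+1}$. By biduality ($(\sigma^\circ)^\circ=\sigma$) we get the irredundant description
\[
\sigma=\{x\in\R^d \mid \langle x,v_i\rangle\ge -1 \text{ for } 1\le i\le d+1\}.
\]
Substituting $x=y+p$ gives
\[
\sigma-p=\{y \mid \langle y,v_i\rangle \ge -(1+\langle v_i,p\rangle)\}=\{y \mid \langle y, v_i/\ell_{v_i}(p)\rangle\ge -1\}.
\]
Dividing by $\ell_{v_i}(p)$ preserves the direction of the inequality. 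This is legitimate because $p\in\operatorname{int}(\sigma)$ means every defining inequality is strict at $p$, so $\ell_{v_i}(p)=1+\langle v_i,p\rangle>0$. Moreover $\mathbf{0}=p-p\in\operatorname{int}(\sigma-p)$. Applying polarity again, $(\sigma-p)^\circ$ is the simplex
\[
\conv\left(\frac{v_i}{\ell_{v_i}(p)} ~\middle|~ 1\le i\le d+1\right).
\]

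Next I would use the standard formula for the volume of a $d$-simplex with vertices $w_1,\dots,w_{d+1}$:
\[
\operatorname{vol}\conv(w_1,\dots,w_{d+1})=\frac{1}{d!}\left|\det M(w)\right|,
\]
where $M(w)$ is the $(d+1)\times(d+1)$ matrix whose $i$-th row is $(1,w_i)$. For $w_i=v_i/\ell_{v_i}(p)$, the $i$-th row equals $\frac{1}{\ell_{v_i}(p)}\bigl(1+\langle v_i,p\rangle,\ v_i\bigr)$. Pulling these positive scalars out of the rows gives
\[
\det M(w)=\prod_{i=1}^{d+1}\ell_{v_i}(p)^{-1}\cdot\det N,
\]
where $N$ has rows $(1+\langle v_i,p\rangle, v_i)$.

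The key observation is that $N$ is obtained from $M(v)$, the matrix with rows $(1,v_i)$, by adding $\sum_j p_j$ times the $(j+1)$-st column to the first column. Equivalently, $N=M(v)\,U$ with $U$ unipotent, so $\det N=\det M(v)$. Therefore
\[
\operatorname{vol}(\sigma-p)^\circ=\frac{\operatorname{vol}(\sigma^\circ)}{\prod_{i=1}^{d+1}\ell_{v_i}(p)},
\]
which is \eqref{volume}.

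The only step needing care is the first: identifying the vertices of $(\sigma-p)^\circ$. This requires biduality for a simplex containing the origin in its interior, together with the positivity of each $\ell_{v_i}(p)$. Both are supplied by the interiority hypotheses on $\mathbf{0}$ and $p$. The determinant manipulation is routine.
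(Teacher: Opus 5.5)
Your proposal is correct and takes essentially the same approach as the paper. Both arguments identify $(\sigma-p)^\circ$ as the simplex with vertices $v_i/\ell_{v_i}(p)$, pull the factors $\ell_{v_i}(p)^{-1}$ out of the determinant, and observe that the extra terms $\langle p, v_i\rangle$ do not change it; the paper does this by multilinearity plus a vanishing determinant with a dependent row, you by the equivalent unipotent column operation, and you are also more explicit about $\ell_{v_i}(p)>0$ and the $1/d!$ normalization.
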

Such a statement was suggested, for example in \cite{gaetz2025} and already proven in more general in \cite{gao2024}. Note that the forms $\ell_{v_i}$ vanish on the corresponding facets of $\sigma$ because the vertices $v_i$ of the dual simplex  are exactly the vectors such that on the corresponding facet we have $\langle v_i, x \rangle = -1$ for all $x$ in that facet.
\begin{proof}
We will show that the vertices of $(\sigma-p)^\circ$ are the vertices of $\sigma^\circ$ scaled by the factors $\ell_{v_i}(p)$. After that, we will show that these special scaling factors $\ell_{v_i}(p)$ factor through the volume, in the sense that $$\operatorname{vol}\left(\Lambda_p(\sigma^\circ)\right) = \left (\prod_{i =1}^{d+1} \ell_{v_i}(p)\right ) \operatorname{vol}(\sigma^\circ),$$ where $\Lambda_p$ is the operator acting on simplices that scales the vertices by the prescribed factors. Note that for an arbitrary choice of scaling factors, this will not hold. \\ 
The shifted simplex $\sigma- p$ has normal fan $\mathcal N (\sigma)$. If $p$ is in the interior of $\sigma$, the shifted simplex is 
\begin{equation}
\sigma -p = \left\{x \in \mathbb{R}^d \mid  \langle x, (\ell_{v_i}(p))^{-1} ~ v_i \rangle \geq -1 \text{ for all } 1\leq i\leq  d+1\}  \right\} \label{shift}
\end{equation}
Therefore, the simplex $(\sigma-p)^\circ$ has vertices $(\ell_{v_i}(p))^{-1} v_i$ (for $1\leq i\leq d+1$) with the notation from above. 
    The volume of the $d$-dimensional simplex $\sigma$ equals the determinant of the matrix $M \in\operatorname{Mat}_{d+1}(\mathbb{R})$ 
    \[
    \begin{pmatrix}
        1 & 1 & \ldots & 1 \\
        v_1 & v_2 & \ldots & v_{d+1} 
    \end{pmatrix}.
    \]
    Similarly, we have 
    \[ \operatorname{vol} (\sigma-p)^\circ =  \frac{\det M'}{\displaystyle \prod_{i= 1}^{d+1}{\ell_{v_i}(p)}} 
    \] where $M'$ denotes the matrix 
     \[
    \begin{pmatrix}
        {\ell_{v_1}(p)} & {\ell_{v_2}(p)} & \ldots & {\ell_{v_{d+1}}(p)}\\
        v_1 & v_2 & \ldots & v_{d+1} 
    \end{pmatrix}
    \]
    by multilinearity of the determinant. 
    Using multilinearity and the definiton of $\ell_{v_i}(p)$ again, we get 
    $$\det(M')  = \det (M) + \det(M'')$$
     where \[  M'' = \begin{pmatrix}
         \langle p, v_1 \rangle & \langle p, v_2 \rangle & \ldots & \langle p, v_{d+1} \rangle \\
        v_1 & v_2 & \ldots & v_{d+1} 
     \end{pmatrix}\]
     
    Since the first row of $M''$ is a linear combination of the last row, it follows that $\det(M'') = 0$ which shows the claim.        
\end{proof}
 
\section{Construction of the dual cosmological polytope}\label{sect:triang}

The cosmological polytope of a graph $G=(V,E) $ lies the affine space $$H = \{(x,y)\in \R^{|V|+|E |}~|~x_1+ \cdots +x_{|V|} + y_1 + \cdots +y_{|E|} = 1\}. $$ The dual polytope in the classical sense is hence not defined. 
To circumvent this issue, we consider $\Cos_G$ as a full-dimensional polytope in the affine hyperplane $H$. In order to describe the vertices of the dual polytope, we make use of the facet description of the cosmological polytope  given by Arkani-Hamed, Benicasa and Postnikov (see \Cref{sect:basicsCosmo}). 

The linear hyperplane normal to $h_t$ (see \eqref{eq:normal}) contains a facet of $\Cos_G.$
Consider the cone $$\operatorname{Cone}(\Cos_G) \coloneqq \left\{ \lambda x \mid \lambda \geq 0, x \in \Cos_G \right\}. $$ We will prove that $\operatorname{Cone}(\Cos_G)^\circ \cap H$ is the dual of the cosmological polytope, where $\operatorname{Cone}(\Cos_G)^\circ$ is the dual cone of $\operatorname{Cone}(\Cos_G)$ if we endow the affine hyperplane $H$ with the right vector space structure.

We justify this construction with the next lemma. In this paper, we use the convention that the \emph{dual cone} of a pointed, full dimensional cone $C \subseteq \mathbb R^{d+1}$ is the cone $$C^\circ \coloneqq \left \{ x \in \mathbb R^{d+1} \mid \langle x, y\rangle \geq 0 \text{ for all } y \in C \right \}.$$

\begin{lemma}\label{lemma:proj}
       Let $C_1$ and $C_2$ be full-dimensional pointed cones in $ \mathbb R^{d+1}$ that are dual to each other. Let $P_1=C_1\cap H$ and $P_2=C_2\cap H$ be the polytopes that arise by intersecting the two cones with a hyperplane $H$ that does not pass through the origin. Then after orthogonal projection via $\pi$ to the linear subspace parallel to $H$ the polytopes $\pi(P_1)$ and $\pi(P_2)$ are dual to each other. 
    \end{lemma}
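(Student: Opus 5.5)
The plan is to compute directly in coordinates adapted to $H$. Write $H=\{x\in\R^{d+1}\mid \langle x,e\rangle = c\}$ with $e$ a unit normal vector and $c>0$, replacing $e$ by $-e$ if necessary. Every point of $H$ then decomposes uniquely as $x=c\,e+x'$ with $x'\in e^\perp$, and $\pi(x)=x'$. For two points $x=c\,e+x'$ and $y=c\,e+y'$ of $H$, orthogonality gives
\[
\langle x,y\rangle = c^2+\langle x',y'\rangle .
\]
So the cone condition $\langle x,y\rangle\ge 0$ becomes the polar condition $\langle x',y'\rangle\ge -c^2$ inside $e^\perp$. Hence the statement to prove is
\[
\pi(P_2)=\{y'\in e^\perp \mid \langle x',y'\rangle\ge -c^2 \text{ for all } x'\in\pi(P_1)\}.
\]
This is the polar dual in $e^\perp\cong\R^d$ once the inner product on $e^\perp$ is rescaled by $c^{-2}$, which is the "right vector space structure" on $H$ mentioned before the lemma. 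If one wants the literal polar with the standard inner product, one normalizes $H$ so that $c=1$, or rescales $\pi(P_2)$ by $c^{-2}$; duality up to this dilation is all that the later volume computations need. I will state this normalization explicitly at the start.

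The inclusion "$\subseteq$" is immediate. If $y\in P_2\subseteq C_2=C_1^\circ$ and $x\in P_1\subseteq C_1$, then $0\le\langle x,y\rangle=c^2+\langle \pi(x),\pi(y)\rangle$.

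For "$\supseteq$", take $y'\in e^\perp$ satisfying the polar inequalities and set $y=c\,e+y'\in H$. It suffices to show $y\in C_1^\circ=C_2$, i.e.\ $\langle z,y\rangle\ge0$ for every $z\in C_1$. The key step is that every nonzero $z\in C_1$ is a positive multiple of a point of $P_1$, i.e.\ $\langle z,e\rangle>0$. Granting this, write $z=\lambda x$ with $\lambda>0$ and $x\in P_1$. Then
\[
\langle z,y\rangle=\lambda\bigl(c^2+\langle \pi(x),y'\rangle\bigr)\ge 0 .
\]

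The main obstacle is justifying that $H$ meets every ray of $C_1$, and likewise of $C_2$; equivalently, that $e$ lies in the interior of both $C_1$ and $C_2=C_1^\circ$. I would derive this from the tacit hypothesis that $P_1$ and $P_2$ are polytopes, meaning nonempty, bounded and full-dimensional in $H$. Boundedness of $C_1\cap H$ for a full-dimensional pointed cone forces $\langle z,e\rangle>0$ on $C_1\setminus\{0\}$. Indeed, if some ray of $C_1$ had $\langle z,e\rangle\le 0$, then either the section would be unbounded (adding $z$ to a point of $P_1$ stays in $C_1\cap H$ when $\langle z,e\rangle=0$), or, by convexity and full-dimensionality, one could produce a ray with $\langle z,e\rangle=0$. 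The same argument applies to $C_2$.

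Finally, $e\in\operatorname{int}(C_1)$ means $c\,e\in\operatorname{relint}(P_1)$, so $\mathbf 0=\pi(c\,e)$ lies in the interior of $\pi(P_1)$. This is exactly the condition under which the polar dual of $\pi(P_1)$ is again a polytope and the vertex–facet correspondence holds. This completes the plan.
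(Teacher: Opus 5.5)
Your proposal is correct and follows the same route as the paper: write each point of $H$ orthogonally so that $\langle x,y\rangle = c^2+\langle \pi(x),\pi(y)\rangle$, and read the dual-cone inequality as the polar inequality in the subspace parallel to $H$. You are more careful than the paper in two places: it tacitly takes the distance of $H$ from the origin to be $1$, and it silently passes from ``for all $y\in C_2$'' to ``for all $y\in P_2$''; your explicit dilation factor $c^2$ and your boundedness/pointedness argument that every ray of each cone meets $H$ supply exactly these two missing steps.
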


\begin{proof}

 We first choose some set of coordinates $x_1, \ldots, x_{d+1}$ such that in these coordinates $x_1 = 1$ for any $x\in H$. 
    We consider the affine space $\{x\in \R^{d+1}~|~x_1 =1 \} = H$ as a vector space by projecting to the linear subspace $\{x\in\R^{d+1}~|~x_1=0\}$. Let $x\in H$. Then $$x \in C_1 \iff \langle x, y \rangle \geq 0 \text { for all } y \in C_2 \iff \langle \pi(x), \pi(y) \rangle \geq -1  $$ where the last equivalence holds as $x_1 y_1 = 1$. This shows that $P_1$ and $P_2$ are dual to each other. 
\end{proof}
We will denote the sum of the coordinates of a vector $x$ by $|x|_1$. We are now in the position to be able to provide a vertex description of the polar dual polytope of $\Cos_G$.
\begin{theorem}\label{thm coordinates}
    Let $G=(V,E)$ be a graph with set of tubes $\mathcal{Z}(G)$. Consider $H =  \{(x,y)\in \R^{|V|+|E|}~|~x_1+ \cdots +x_{|V|} + y_1 + \cdots +y_{|E|} = 1\}$ as a vector space equipped with the scalar product induced by the orthogonal projection to the corresponding linear subspace. Then 
    \begin{equation}\label{eq:Vrep}
    \conv \left (\frac{h_t}{|h_t|_1}~|~ t\in\mathcal{\mathcal Z(G)} \right )
    \end{equation}
    is dual to $\Cos_G$ as a full-dimensional polytope in $H$. Moreover, every $z_t\coloneqq\frac{h_t}{|h_t|_1}$ is a vertex of this polytope.
\end{theorem}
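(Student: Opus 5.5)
The plan is to apply \Cref{lemma:proj} with $C_1=\operatorname{Cone}(\Cos_G)$ and $C_2=\operatorname{Cone}(\Cos_G)^\circ$. Then we identify $C_2\cap H$ with the convex hull in \eqref{eq:Vrep}.

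First I check the hypotheses of \Cref{lemma:proj}. Since $\Cos_G\subseteq H$ has dimension $|V|+|E|-1$ and $H$ misses the origin, $C_1$ is a full-dimensional cone in $\R^{|V|+|E|}$. It is pointed because $\Cos_G$ lies in the open halfspace $\{|x|_1>0\}$; equivalently, $\mathbf{1}$ is an interior point of $C_1^\circ$. Hence $C_2=C_1^\circ$ is also full-dimensional and pointed, and $C_1$, $C_2$ are dual to each other. We also have $C_1\cap H=\Cos_G$, since every point of $C_1$ with coordinate sum $1$ is $\lambda x$ with $x\in\Cos_G$ and $\lambda=1$.

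Next, from the irredundant facet description of $\Cos_G$ recalled in \Cref{sect:basicsCosmo}, I get
\[
C_1=\{z\mid \langle z,h_t\rangle\ge 0 \text{ for all } t\in\mathcal Z(G)\}.
\]
Both sides are cones whose intersection with $H$ equals $\Cos_G$, and every nonzero point of either cone has positive coordinate sum. For the right-hand side this holds because $\mathbf 1=\sum_v h_{\{v\}}$ up to a positive combination: $\sum_v h_{\{v\}}=\sum_v \x_v+\sum_e 2\y_e$, which is not exactly $\mathbf 1$. So I argue differently: the right-hand cone is pointed, since otherwise it would contain a line, and its intersection with $H$ would then be unbounded or the cone would meet $\{|z|_1=0\}$ nontrivially. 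In either case this contradicts that its section by $H$ is the bounded full-dimensional $\Cos_G$ together with $\dim\Cos_G=|V|+|E|-1$. I expect this step to be the main bookkeeping obstacle. A cleaner route is to note that each $h_t$ has nonnegative entries, so the cone generated by the $h_t$ lies in the nonnegative orthant, and any $z$ in the right-hand cone with $|z|_1\le 0$ and $z\neq 0$ can be excluded by testing against suitable $h_t$.

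By Farkas duality, $C_2=C_1^\circ=\operatorname{cone}(h_t\mid t\in\mathcal Z(G))$. Every $h_t\neq 0$ has nonnegative entries, so $|h_t|_1>0$. Therefore
\[
C_2\cap H=\conv\left(\frac{h_t}{|h_t|_1}\;\middle|\; t\in\mathcal Z(G)\right),
\]
since a positive combination $\sum\lambda_t h_t$ with coordinate sum $1$ is a convex combination of the $z_t$ with weights $\lambda_t|h_t|_1$. \Cref{lemma:proj} then shows that the projections of $\Cos_G$ and of this convex hull are polar duals in $H$, with the stated vector space structure.

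Finally, each $z_t$ is a vertex. The facet description is irredundant, so each $h_t$ spans an extreme ray of $C_2$: the rays of the dual cone correspond bijectively to the facets of $C_1$, which are the cones over the facets of $\Cos_G$. Moreover, distinct tubes give non-proportional $h_t$, because they define distinct facets. A ray meets $H$ in a single point, and the extreme rays of a pointed cone meet a transversal hyperplane section exactly in its vertices. Hence $z_t$ is a vertex of $C_2\cap H$.
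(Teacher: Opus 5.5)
Your route is the paper's: identify $\operatorname{Cone}(\Cos_G)$ with the inequality cone $D=\{z \mid \langle z,h_t\rangle\ge 0 \text{ for all } t\in\mathcal Z(G)\}$ using the facet description, dualize to get $\operatorname{cone}(h_t\mid t\in\mathcal Z(G))$, intersect with $H$, and apply \Cref{lemma:proj}. Your extra checks are correct and fill in points the paper leaves implicit: the hypotheses of \Cref{lemma:proj}, the rescaling showing that the section equals $\conv(z_t)$, and the extreme-ray argument for the vertex claim. In particular, the paper gives no argument at all for the ``moreover'' part.

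The step you flagged is not proved as written, because pointedness of $D$ is not the property you need. What you need is $D\cap\{|z|_1\le 0\}=\{0\}$, since that is what gives $D\subseteq\operatorname{Cone}(\Cos_G)$. A pointed cone can still contain rays with negative coordinate sum. Your fallback remark also excludes nothing: that $\operatorname{cone}(h_t)$ lies in the nonnegative orthant only tells you that $D$ contains the orthant.

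You were one tube away from a clean argument. For connected $G$, $h_G=\sum_{v}\x_v$ (otherwise sum $h$ over the components), so $h_G+\sum_{v\in V}h_{\{v\}}=2\cdot\mathbf 1$. Hence $2|z|_1=\langle z,h_G\rangle+\sum_v\langle z,h_{\{v\}}\rangle\ge 0$ for every $z\in D$. If $|z|_1>0$, then $z/|z|_1\in D\cap H=\Cos_G$, so $z\in\operatorname{Cone}(\Cos_G)$. If $|z|_1=0$, then $p+sz\in D\cap H=\Cos_G$ for every $p\in\Cos_G$ and every $s\ge 0$, so boundedness of $\Cos_G$ forces $z=0$.

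This is exactly the step the paper dispatches with ``by the facet description''. With this patch your proof is complete.
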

We will denote the dual polytope of $\Cos_G$ as in the previous statement by $\DuCos_G$. Before giving the proof, we illustrate this statement via an example.
\begin{example}
    Consider the star graph $K_{1,3}$ with $4$ vertices. It has $11$ connected subgraphs and the list of vertices of $\DuCos_{K_{1,3}}$ are on the right-hand side. Here we assume that the coordinates are ordered as $(v_0, v_1, v_2, v_3, e_1, e_2, e_3)$. 
    
 \vspace{0.5cm}
\begin{minipage}{0.45\textwidth}
    \centering
    \begin{tikzpicture}
        \tikzset{vertex/.style={circle,fill,inner sep=1.3pt}}

        \node[vertex,label=above:$v_1$] (v1) at (90:1.7) {};
        \node[vertex,label=below left:$v_2$] (v2) at (210:1.7) {};
        \node[vertex,label=below right:$v_3$] (v3) at (330:1.7) {};
        \node[vertex,label=above left:$v_0$] (v0) at (0,0) {};

        \draw (v0) -- node[right] {$e_1$} (v1);
        \draw (v0) -- node[above left]  {$e_2$} (v2);
        \draw (v0) -- node[above right] {$e_3$} (v3);
    \end{tikzpicture}

    
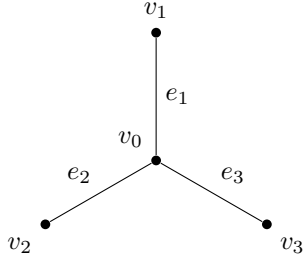
\captionof{figure}{The star graph $K_{1,3}$}
\end{minipage}
\hfill
\begin{minipage}{0.5\textwidth}
 
    \begin{itemize}
        \item $\frac{1}{2}(0,1,0,0,1,0,0)$
        \item $\frac{1}{2}(0,0,1,0,0,1,0)$
        \item $\frac{1}{2}(0,0,0,1,0,0,1)$
        \item $\frac{1}{4}(1,0,0,0,1,1,1)$
        \item $\frac{1}{4}(1,1,0,0,0,1,1)$
        \item $\frac{1}{4}(1,0,1,0,1,0,1)$
        \item $\frac{1}{4}(1,0,0,1,1,1,0)$
        \item $\frac{1}{4}(1,1,1,0,0,0,1)$
        \item $\frac{1}{4}(1,1,0,1,0,1,0)$
        \item $\frac{1}{4}(1,0,1,1,1,0,0)$
        \item $\frac{1}{4}(1,1,1,1,0,0,0)$
    \end{itemize}
\end{minipage}

\end{example}
\vspace{0.5cm}

\begin{proof}[Proof of \Cref{thm coordinates}]
    We let $C_T$ be the cone with rays $h_t$ for $t\in \mathcal{Z}(G)$. We first show that $$C_T =(\operatorname{Cone}(\Cos_G))^\circ$$  Note that $$\operatorname{Cone}(\Cos_G) = \{(x,y) \in \mathbb{R}^{|E|+|V|} \mid \langle( x,y), h_t \rangle \geq 0 \text{ for all } t \in  \mathcal Z(G)  \}$$ by the facet description of $\Cos_G$ (see \Cref{sect:basicsCosmo}).
    
    It is clear that $\langle \lambda h_t, x \rangle \geq 0$ for any tube $t$ and any $x \in \Cos_G$. Therefore, $\langle y, \lambda x \rangle \geq 0$ for any $y \in C_T$ and any  $\lambda \geq 0$ and $C_T \subseteq (\operatorname{Cone}\left(\Cos_G\right))^\circ$ follows. On the other hand, let $x\in ({C_T})^\circ$. Then $\langle x, \lambda h_t\rangle \geq 0$ for any $t \in  \mathcal Z(G)$ and $ \lambda \geq 0$. Hence, $x\in \operatorname{Cone}(\Cos_G)$. 
    Dualizing $({C_T})^\circ \subseteq \operatorname{Cone}(\Cos_G) $ reverses inclusion so $$(\operatorname{Cone}\Cos_G)^\circ \subseteq ({C_T})^{\circ \circ} = C_T$$ 
    where we use that  we work with convex, closed cones. 
    
     We apply now \Cref{lemma:proj} with respect to $H$ and conclude $$\DuCos_G = \left(\operatorname{Cone}(\Cos_G)  \cap H\right)^\circ = C_T \cap H =\conv \left( \frac{h_t}{|h_t|_1}~|~t\in\mathcal{Z}(G) \right).  $$
\end{proof}

\Cref{thm coordinates} allows us to see any triangulation of $\DuCos_G$ without additional vertices as a collection of sets of tubes, where each set has size $|E|+|V|$. A natural choice of such a set system is the set of maximal tubings. Already in the seminal paper \cite{arkani17}, the authors show in some examples that the corresponding collection is a triangulation of $\DuCos_G$. We will prove this statement in full generality. The proof will make use of the following simple lemma:

    \begin{lemma}\label{l:trig2Facets}
        Let $P\subseteq \R^d$ be a $d$-dimensional polytope and let $\Delta$ be a  pure $d$-dimensional simplicial complex on the vertices of $P$. For a simplex $\sigma\in \Delta$, let $\|\sigma\|=\conv(v~|~v\in \sigma)$ be its geometric realization and let $\|\Delta\|=\bigcup_{\sigma\in\Delta}\|\sigma\|$ be the geometric realization of $\Delta$. Assume that for any pair of simplices $\sigma,\tau\in \Delta$, we have that $\dim\sigma=\dim\|\sigma\|$ and $\|\sigma\|\cap\|\tau\|=\|\sigma\cap\tau\|$. If for any $(d-1)$-dimensional face $\sigma\in\Delta$, either 
        \begin{enumerate}
            \item[(1)] $\|\sigma\|\subseteq F$ for some facet $F$ of $P$, or 
            \item[(2)] $\sigma$ is contained in exactly two  facets of $\Delta$,
        \end{enumerate}
        then $\|\Delta\|$ is a triangulation of $P$. 
    \end{lemma}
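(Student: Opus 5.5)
The plan is the standard pseudomanifold argument: show that $\|\Delta\|$ covers $P$. The hypotheses already say that the simplices $\|\sigma\|$ are geometric simplices of the right dimension and that they intersect properly, i.e.\ $\|\sigma\|\cap\|\tau\|=\|\sigma\cap\tau\|$. So $\|\Delta\|$ is a triangulation as soon as $\|\Delta\|=P$. One inclusion is immediate: every vertex of $\Delta$ is a vertex of $P$ and $P$ is convex, so $\|\Delta\|\subseteq P$. For the reverse inclusion, $\|\Delta\|$ is a finite union of compact sets, hence closed, and $\operatorname{int}(P)$ is dense in $P$. It therefore suffices to show $\operatorname{int}(P)\subseteq\|\Delta\|$.

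Suppose, for a contradiction, that some $p\in\operatorname{int}(P)$ is not in $\|\Delta\|$. Since $\|\Delta\|$ is closed, an open neighbourhood of $p$ also misses $\|\Delta\|$. Fix a $d$-simplex $\tau_0\in\Delta$ (we may assume $\Delta\neq\emptyset$) and a point $q\in\operatorname{int}\|\tau_0\|$. Only finitely many affine subspaces of dimension at most $d-2$ are spanned by faces of $\Delta$ of codimension at least $2$. Perturbing $p$ and $q$ within their open neighbourhoods, I choose them generically so that the segment $[q,p]$ meets none of these subspaces. This is possible because the pairs $(q,p)$ whose segment meets a fixed codimension-$2$ affine subspace form a measure-zero set.

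Consider $S=\{s\in[0,1]\mid q+s(p-q)\in\|\Delta\|\}$. It is closed, contains $0$, and misses $1$. Let $s^*=\sup\{s\mid [0,s]\subseteq S\}$ and $r=q+s^*(p-q)$. Then $r\in\|\Delta\|$, and points just beyond $r$ on the segment lie outside $\|\Delta\|$. The point $r$ lies in some $d$-simplex $\|\tau_1\|$ and on its boundary. By genericity, $r$ lies in the relative interior of a $(d-1)$-face $\|\sigma\|$ of $\|\tau_1\|$, and the segment crosses the hyperplane $\operatorname{aff}\|\sigma\|$ transversally at $r$. Since $q,p\in\operatorname{int}(P)$ and $P$ is convex, the point $r$ lies in $\operatorname{int}(P)$, so $\|\sigma\|$ is not contained in any facet of $P$. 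Hence condition~(1) fails and, by~(2), $\sigma$ lies in exactly two facets $\tau_1,\tau_2$ of $\Delta$.

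The key step, which I expect to be the only delicate point, is to show that $\|\tau_1\|$ and $\|\tau_2\|$ lie on opposite sides of $\operatorname{aff}\|\sigma\|$. Suppose instead that both lay on the same side. Then near $r$ both simplices contain a small half-ball around $r$ on that side, so $\|\tau_1\|\cap\|\tau_2\|$ would be $d$-dimensional. But $\|\tau_1\|\cap\|\tau_2\|=\|\tau_1\cap\tau_2\|=\|\sigma\|$ has dimension $d-1$, a contradiction. Thus $\|\tau_2\|$ contains a neighbourhood of $r$ on the side opposite to $\|\tau_1\|$. Since the segment crosses transversally, it enters that side immediately after $r$ (the segment came from inside $\|\tau_1\|$). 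So a small interval beyond $s^*$ lies in $S$, contradicting the choice of $s^*$. Hence $\operatorname{int}(P)\subseteq\|\Delta\|$, and by closedness $P=\|\Delta\|$, which proves the lemma.
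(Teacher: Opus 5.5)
Your argument is correct, and it takes a more direct route than the paper's. The paper argues by induction on $d$. It first shows that the restriction $\Delta_F$ to every facet $F$ of $P$ again satisfies the hypotheses, by walking around a ridge of $\Delta_F$ through interior ridges of $\Delta$. From this it concludes $\partial P\subseteq\|\Delta\|$, and only then runs a closest-point argument on a line through a missing point $p$ with endpoints $q_1,q_2\in\|\Delta\|$.

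You dispense with both the induction and the boundary step. You start the segment at an interior point of a $d$-simplex, and a segment between two points of $\operatorname{int}(P)$ stays in $\operatorname{int}(P)$. So every ridge you meet is interior, condition~(1) is only ever invoked to be ruled out, and condition~(2) does all the work. Closedness of $\|\Delta\|$ and density of $\operatorname{int}(P)$ then finish the proof.

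You also make explicit two things the paper leaves implicit. One is a precise genericity statement in place of its ``wiggling''. The other is the fact that the two $d$-simplices through an interior ridge lie on opposite sides of it, which is exactly where the hypothesis $\|\sigma\|\cap\|\tau\|=\|\sigma\cap\tau\|$ enters. You also never face a case the paper passes over: the closest point $q_i'$ may coincide with a boundary point of $P$ whose ridge satisfies~(1) rather than~(2). The paper's route additionally yields that $\Delta_F$ triangulates each facet $F$, but this follows anyway once $\|\Delta\|=P$ and the intersection property are known.

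One minor remark: transversality at $r$, and the claim that the segment arrives through $\|\tau_1\|$, are not actually needed, since $\|\tau_1\|\cup\|\tau_2\|$ already contains a whole ball around $r$. In any case transversality does follow from your genericity. Because $q$ is interior to $\|\tau_0\|$, we have $q\notin\|\sigma\|$. A segment lying in $\operatorname{aff}\|\sigma\|$ from $q$ to a relative interior point of $\|\sigma\|$ would therefore have to meet a $(d-2)$-face, which your genericity excludes.
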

    We want to remark that the conditions on the dimensions and intersections of the geometric realizations just say that $\|\Delta\|$ is a geometric simplicial complex of dimension $d$. In particular, these conditions are trivially satisfied by any subcomplex of a triangulation of $P$.
    
    \begin{proof} 
    We will show by induction on $d$  that $\|\Delta\| $ is a convex set containing the boundary of $P$. 
    The statement is clear for $d = 1$. Suppose the statement holds for any polytope and any $\Delta$ with the required properties that have dimension at most $d$.
    Let $\dim P = d+1$ and $\Delta$ as in the lemma. For a facet $F$ of $P$ let $\Delta_F\coloneqq \{\sigma \in \Delta~|~\|\sigma\|\subseteq F\}$ be the restriction of $\Delta$ to the vertices of  $F$. We show the following claim.

    {\sf Claim:} $\Delta_F$ is a triangulation of $F$ for any facet $F$ of $P$.
    
    To see this, we show that $\Delta_F$ satisfies the induction hypothesis. For this, we only need to verify conditions (1) and (2) since the other properties are directly inherited from $\Delta$. Let $\sigma\in \Delta_F$ be a $(d-1)$-dimensional face and assume that $\|\sigma\|\not\subseteq G$ for any facet $G$ of $F$. Since $\Delta$ is pure, there exists a facet $\tau^{(1)}\in \Delta$ with $\sigma\subseteq \tau^{(1)}$. In particular, $\tau^{(1)}$ has two $(d)$-dimensional faces $\tau^{(1)}_1$ and $\tau^{(1)}_2$ that contain $\sigma$. Since $\dim\|\tau^{(1)}\|=d+1$, by assumption, at least one of $\|\tau^{(1)}_1\|$ and $\|\tau^{(2)}_2\|$ has to lie in the interior of $P$. Say this is $\|\tau_1^{(1)}\|$. (2) implies that there exists a unique $d$-simplex $\tau^{(2)}$ with $\tau^{(1)}\neq \tau^{(2)}$ and $\tau^{(1)}_1\subseteq \tau^{(2)}$. Moreover, $\tau^{(2)}$ has a unique  $(d)$-dimensional face $\tau_2^{(2)}$ different from $\tau^{(1)}_1$ that contains $\sigma$. If $\|\tau_2^{(2)}\|$ is already contained in the boundary of $P$, then it has to be contained in $F$ already, since otherwise $\|\sigma\|$ would be contained in a facet of $F$. If $\|\tau_2^{(2)}\|$ is contained in the interior of $P$, then we can repeat the argument from before. Since $\|\sigma\|$ is contained in the boundary of $P$, this procedure has to end and after finally many steps, we have found a $(d)$-face $\tau$ with $\|\tau\|\subseteq F$ and $\sigma\subseteq \tau$. Applying the same line of arguments to $\tau_1^{(2)}$, if necessary, shows that $\sigma $ satisfies (2).
    It follows from the claim that $\|\Delta\|$ contains the boundary of $P$.

    It remains to show that $\|\Delta\|$ is convex.
    Assume, by contradiction, that there is a point $p\in P\setminus \|\Delta\|$. Since  $P$ is convex and $\|\Delta\|$ contains the boundary of $P$, there exist points $q_1, q_2\in\|\Delta\|$ and $0 <\lambda<  1$ such that $p=\lambda q_1+(1-\lambda)q_2$.  
    Consider the line segment $L$ between $q_1$ and $q_2$. Let $q'_1$ and $q'_2$ be the closest points contained in $\|\Delta\|\cap L$ that lie between $q_1$ and $p$ respectively $q_2$ and $p$. After possibly wiggling $q'_1$ and $q'_2$, we can assume that there exist $(d-1)$-simplices $\rho_1$ and $\rho_2$ in $\Delta$ such that $q'_i$ lies in the relative interior of $\|\rho_i\|$ for $1\leq i\leq 2$.  Since $\|\rho_i\|$ does not lie in the boundary of $P$, by construction, it must satisfy condition (2). Hence there exist two $d$-simplices containing $\rho_i$. This yields a contradiction since then $q'_i$ was not the closest point to $q$ on $L$. It follows that $p\in \|\Delta\|$ which finishes the proof.
    \end{proof}

    We can finally provide the proof of \Cref{t:tubingTrig}.

\begin{proof}[Proof of \Cref{t:tubingTrig}]
 Let $T_1,\ldots,T_{\tau(G)}$ be the maximal tubings of $G$. For $1\leq i\leq \tau(G)$, let
    \[
\sigma_i = \conv\left( z_t~ |~ t \in T_i \right),
\]
where $z_t=\frac{h_t}{|h_t|_1}$ for a tube $t$. We need to show that $\{\sigma_1,\ldots,\sigma_{\tau(G)}\}$ is a triangulation of
 $\DuCos_G$. 
 To do so, we will apply \Cref{l:trig2Facets}. We first show that $\sigma_i\cap \sigma_j$ is a face of $\sigma_i$ and $\sigma_j$ (if $i\neq j$).
 
To do so, we construct a hyperplane $H_{i,j}$ that separates $\sigma_i$ from $\sigma_j$, i.e., such that $\sigma_i$ and $\sigma_j$ lie on different sides of $H_{i,j}$ and their intersection is contained in $H_{i,j}$. 
For each tube $s \in T_i\setminus T_j$, by maximality of $T_j$, there exists a tube $r \in T_j\setminus T_i$ that intersects $s$. In particular, there exist edges $e =v_1v_2\in E(s)\setminus E(r)$ and $f = v_3v_4\in E(r)\setminus E(s)$ with $v_2,v_3\in s\cap r$.  
    We claim that the linear functional
    $$
        a_{sr}(x,y)\coloneqq x_{v_1}+ x_{v_2} - y_{e} - x_{v_3} - x_{v_4} + y_{f}
    $$
    takes nonnegative values on the vertices of $\sigma_i$ with strictly positive value on $z_s$ and nonpositive values on the vertices of $\sigma_j$ with strictly negative value on $z_r$. Let $t\in \mathcal{Z}(G)$. 
    By construction of $\DuCos_G$ in \eqref{eq:Vrep}, it is clear that $(z_t)_e =  (z_t)_{v_1}+(z_t)_{v_2}$ if $e\notin t$. Moreover, we have $(z_t)_{v_1}=(z_t)_{v_2}=\frac{1}{|h_t|_1}$ and $(z_t)_e=0$ if $e\in t$. We get the corresponding statements, if $f\in t$. Combining this yields

    \[
    a_{sr}(z_t)=\begin{cases}
        \frac{2}{\vert h_t\vert_1} +\frac{-2}{\vert h_t\vert_1}=0, &\qif e\in t \text{ and }f\in t\\ 
         \frac{2}{\vert h_t\vert_1}+0=\frac{2}{\vert h_t\vert_1}>0, &\qif e\in t \text{ and } f\notin t\\
    0 +\frac{-2}{\vert h_t\vert_1}=\frac{-2}{\vert h_t\vert_1}<0, &\qif e\notin t \text{ and } f\in t\\
    0+0 =0,&\qif e\notin t, \text{ and } f\notin t
    \end{cases}.
    \]
    Since no tube in $T_i$  contains $f$ and no tube in $T_j$ contains $e$, the claim from above follows. 
    We can now define the separating hyperplane $H_{i,j}$ for $\sigma_i$ and $\sigma_j$. 
    Let
    $$
      H_{i,j}\coloneqq\{(x,y)\in \R^{|V|+|E|}~|~   \sum_{\substack{s\in T_i,r\in T_j \\ s\text{ intersects } r}} a_{sr}(x,y)=0\}.
    $$
    The previous arguments for $a_{r,s}$ directly imply that $\sigma_i$ and $\sigma_j$ are contained in different sides of $H_{i,j}$ with 
    \[
    H_{i,j}\cap (\sigma_i\cup\sigma_j)=\sigma_i\cap \sigma_j=\conv(z_t~:~t\in T_i\cap T_j).
    \]
    Thus, $H_{i,j}$ is a separating hyperplane and $\sigma_i\cap \sigma_j$ is a face of $\sigma_i$ and $\sigma_j$ as desired.

    Let $\Delta_G$ be the simplicial complex with facets the vertex sets of the simplices $\sigma_t$. In order to apply \Cref{l:trig2Facets} we need to verify that each face of dimension $|V|+|E|-2$, referred to as \emph{ridge} throughout the rest of this proof, satisfies conditions (1) or (2) from this lemma. 

    Let $R\subseteq \sigma_i$ be the ridge, obtained by removing  vertex $v_t$ corresponding to tube $t$ from $\sigma_i$.
    We will distinguish between three cases.

{\sf Case 1: $t$ is a singleton.} By \Cref{l:properties_max_tubing} (1), there  exists a unique smallest tube in $T_i$ that contains $t$, with edge $e=\{v,w\}$ for some $w\in V$. Moreover, we know that each tube $s\in T_i\setminus \{t\}$ that contains $v$ has to contain $e$, too. The vector $\p_{ew}=\x_v-\x_w+\y_e$ is the normal vector of a facet $F$ of $\DuCos_G$ as it is a vertex of $\Cos_G$. Additionally,  the only tubes that do not lie on this facet are the ones that contain $v$ but not $e$, since in every other case the negative contribution of $w$ cancels out the positive contribution from $v$ or $e$.
    This shows that $\|R\|$ is contained in the facet $F$ and thus satisfies condition (1) of \Cref{l:trig2Facets}.

  {\sf Case 2: $t=G$.}  By \Cref{l:properties_max_tubing} (3)  there exists an edge $e$ that $t$ introduces in $T_i$. In particular, $t$ is the only tube in $T_i$ that contains $e=vw$. Consider the facet $F$ of $\DuCos_G$ with facet normal $\p_{e}=\x_v+\x_w-\y_f$. It is easy to see that the only vertices that do not lie on $F$, correspond to tubes that contain $e$. As is Case 1 it follows that $\|R\|\subseteq F$.

  {\sf Case 3: $t\neq G$ and $t$ is not a singleton.}
We will show that there exists a unique $j\neq i$ such that $R\subseteq \sigma_j$. 
    By \Cref{l:properties_max_tubing}, we know that $t$ has a unique successor $\hat{t}$, one or two predecessors $s_1$ (and possibly $s_2$) and that $t$ introduces a unique edge $e$. Similarly, $\hat{t}$ introduces a  unique edge $f$ and has a second (possibly empty) predecessor $s$. 
    Without loss of generality, we can assume that $f$ is incident to a vertex in $s_1$. 
    Let $t'$ the tube obtained by taking the union of $s_1$ and $s$ and the edge $f$. The tube $t'$ is a compatible tube, as it is contained in $\hat{t}$, it contains $s_1$ and $s$ and all their subtubes and $t'$ is disjoint from $s_2$ and all its subtubes. Hence, $t'$ also yields a facet $\sigma_j$ containing a vertex $w$ corresponding to $t'$. So $T_j= (T_i\cup  \{t'\})\setminus \{t\}$ is a tubing, such that $\sigma_j$ contains $R$. By construction, this tubing is unique.

It finally follows from \Cref{l:trig2Facets} that $\{\sigma_1,\ldots,\sigma_{\tau(G)}\}$ is indeed a triangulation of $\DuCos_G$.
    \end{proof}    

    \begin{remark}[Facets of $\DuCos$]\label{r:faceDual}
We have used in the previous proof that, since $\DuCos_G$ are $\Cos_G$ dual to each other, the facet normals of $\DuCos_G$ are given by the vertices of $\Cos_G$.
Since $\Cos_G$ has 3 vertices per edge of $G$, this, in particular, means that $\DuCos_G$ has $3$ facets for each edge $e=\{v,w\}\in G$. 
Namely,
\begin{itemize}
    \item[(1)] $F_{ew}= \{(x,y)\in \Cos_G~|~ -x_v+x_w+y_e=0\}$, which contains all vertices of $\DuCos_G$  except the ones corresponding to tubes containing  $w$ but not $e$,
    \item[(2)] $F_{ev}= \{(x,y)\in \Cos_G~|~ x_v-x_w+y_e=0\}$, which contains all vertices of $\DuCos_G$ except the ones corresponding containing $v$ but not $e$, 
    \item[(3)] $F_{e}= \{(x,y)\in\Cos_G~|~ x_v+x_w-y_e=0\}$, which contains all vertices of $\DuCos_G$ corresponding to tubes that do not contain $e$.
\end{itemize}
More generally, we know that the face lattice of $\Cos_G$ is isomorphic to the face lattice of $\DuCos_G$ if one reverses the order relations.  
This then also yields a bijection between the faces of $\Cos_G$ that contain a certain face $F$ and the faces of $\DuCos_G$ that are contained in its polar $F^\circ$.
\end{remark}

We recall that the \emph{vertex figure} of a $d$-dimensional polytope $P$ at a vertex $v$ is the $(d-1)$-dimensional polytope obtained by intersecting $P$ with a hyperplane that separates $v$ from all the other vertices of $P$. The combinatorial type of this polytope is independent of the chosen hyperplane. More precisely, $k$-faces of the vertex figure correspond to $(k+1)$-faces of $P$ containing $v$ and this extends to an order-preserving bijection between the face lattice of the vertex figure and the corresponding interval in the face lattice of $P$.

\begin{theorem}\label{t:subgraphs_cosmological}
    Let $G=(V,E)$ be a graph and let $e=\{v,w\}\notin E$. Let $G'=(V,E')=(V\cup e,E\cup\{e\}$. Then $\DuCos_G$ is combinatorially equivalent to the facet $F_{e}$ of $\DuCos_{G'}$. Equivalently,  $\Cos_G$ is combinatorially equivalent to the vertex figure of $\p_e$ in $\Cos_{G'}$.
\end{theorem}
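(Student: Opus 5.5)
Since $F_e$ is the facet of $\DuCos_{G'}$ polar to the vertex $\p_e$ of $\Cos_{G'}$, its face lattice is the interval of faces of $\Cos_{G'}$ containing $\p_e$, reversed (\Cref{r:faceDual}). That interval is the face lattice of the vertex figure of $\p_e$. Hence the two formulations in \Cref{t:subgraphs_cosmological} are equivalent, and it suffices to show that the vertex figure of $\p_e$ in $\Cos_{G'}$ is combinatorially equivalent to $\Cos_G$. Since faces of a polytope are determined by their vertex sets, I will work with vertex sets throughout and use the characterization of \Cref{t:CosmoFaces}.

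\textbf{Step 1: identify the vertex figure's vertices.} Vertices of the vertex figure correspond to edges of $\Cos_{G'}$ through $\p_e$. I would determine these via \Cref{t:CosmoFaces}, or dually via facets of $F_e$ (tubes $t$ of $G'$ with $e\notin t$). The expected answer is: for every $q\in V(\Cos_G)\subseteq V(\Cos_{G'})$, the segment $[\p_e,q]$ is an edge. Moreover, $\p_{e,v}$ and $\p_{e,w}$ are not adjacent to $\p_e$ in the appropriate sense. Indeed, by condition (1) of \Cref{t:CosmoFaces}, $\{\p_e,\p_{e,v}\}$ forces $\p_{e'},\p_{e',v}$ for all $e'\ni v$, so it is not a face. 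This gives the natural bijection
\[
\text{faces of }\Cos_{G'}\text{ containing }\p_e \;\longleftrightarrow\; \{X\subseteq V(\Cos_G)\},\qquad X'\mapsto X'\setminus\{\p_e,\p_{e,v},\p_{e,w}\}.
\]
The main task is to show that $X'\ni\p_e$ is a face of $\Cos_{G'}$ exactly when its restriction $X$ is a face of $\Cos_G$, after adding the forced vertices from $e$.

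\textbf{Step 2: compare the conditions of \Cref{t:CosmoFaces}.} For a set $X'$ containing $\p_e$, I would check whether $\p_{e,v}$ or $\p_{e,w}$ is forced. I expect the correct map to be the following. Given a face $X$ of $\Cos_G$, set $X'=X\cup\{\p_e\}$, and add $\p_{e,v}$ exactly when $X$ contains some pair $\p_{e'},\p_{e',v}$ with $v\in e'$ (and similarly for $w$). Alternatively, one may pass to the facet description and compare tube sets. For each condition (1) at a vertex $u\notin\{v,w\}$, nothing changes. At $u=v$, condition (1) for $G'$ involves $e$ as well, and containing $\p_e$ makes it consistent with the added $\p_{e,v}$. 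Cycles through $e$ in condition (2) require $\p_{e,v}$ and $\p_{e,w}$ together with oriented paths in $G$. I must check that these conditions are automatically satisfied or are implied by conditions already present in $X$.

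\textbf{Main obstacle.} The delicate part is the cycle condition (2) for cycles using the new edge $e$, together with establishing that the map is an order-preserving bijection rather than merely an injection. I would instead try the dual formulation first, since it is cleaner. The facets of $F_e$ are intersections with facets $z_t^\perp$ for tubes $t$ of $G'$ with $e\notin E(t)$. Such tubes are exactly the tubes of $G$, where a tube of $G$ is viewed in $G'$ and $V$ is unchanged. I would then show that projecting away the $\y_e$ coordinate maps the points $z_t$, with $t\in\mathcal Z(G)$, affinely and bijectively onto $\DuCos_G$. Since $(z_t)_{e}$ is the affine expression $(z_t)_v+(z_t)_w$ for $e\notin t$, the points lie on $F_e$. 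After rescaling by $|h_t|_1$ (a projective transformation, which preserves combinatorial type), their images are the vertices $h_t^G/|h_t^G|_1$ of $\DuCos_G$. A projective transformation between the two vertex configurations then yields the combinatorial equivalence directly. I expect this route to work, and I would check that the rescaling is positive on all these points so that it is admissible.
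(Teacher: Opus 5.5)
Your Steps 1--2 are an unfinished version of the paper's own argument. The dual route you end with is correct, genuinely different, and makes them unnecessary.

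\textbf{Comparison with the paper.} The paper stays on the primal side. For a face $F$ of $\Cos_G$ it sets $F'=F\cup\{\p_e\}$, adjoining $\p_{e,v}$ (resp.\ $\p_{e,w}$) exactly when $F\supseteq\{\p_f,\p_{f,v}\}$ for some $f\ni v$ (resp.\ the analogue at $w$). This is precisely your Step 2 map. It then verifies both conditions of \Cref{t:CosmoFaces}, handling the cycle condition through $e$ by propagating condition (1) around the cycle. It obtains $\dim F'=\dim F+1$ from the affine relation $\p_{e,v}=\p_{f,v}+\p_f-\p_e$, and justifies invertibility only by noting that the conditions pass to subgraphs.

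You instead use that the vertices of $F_e$ are the $z_t$ with $t\in\mathcal Z(G)$, and that $h^{G'}_t=h^G_t+|e\cap V(t)|\,\y_e$. So forgetting $y_e$ and then applying $q\mapsto q/|q|_1$ sends $z^{G'}_t$ to $z^G_t$. This avoids all case analysis, labels the correspondence by tubes, and gives projective rather than merely combinatorial equivalence.

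\textbf{One step to make explicit.} Rescaling points individually by positive factors does not preserve combinatorial type in general. It does here because $F_e$ lies in $\{y_e=x_v+x_w\}$ inside the coordinate-sum-one hyperplane for $G'$. Hence forgetting $y_e$ is injective on $F_e$ and maps it into the affine hyperplane $\{q:|q|_1+q_v+q_w=1\}$, which misses the origin. On that hyperplane your rescaling is the single projective map $q\mapsto q/|q|_1$. Its denominator is positive on the whole image of $F_e$, since that image consists of nonnegative nonzero vectors, and not merely at the vertices. Also, your sentence about ``facets of $F_e$'' should speak of vertices: the $z_t$ with $e\notin E(t)$ are the vertices of $F_e$.

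\textbf{The hypothesis $v,w\in V$.} You tacitly assume $v,w\in V$: you write ``$V$ is unchanged'', and Step~1 needs some edge $e'\neq e$ at $v$. This hypothesis cannot be dropped, even though the statement and the remark after it allow $v$ or $w$ outside $V$. If $v\notin V$, the singleton $\{v\}$ is an extra tube of $G'$ avoiding $e$, and the dimensions no longer match.

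For example, let $G$ be the single edge $ab$ and $e=bc$ with $c$ new. Then $\DuCos_G$ is a triangle. The tubes $\{a\},\{b\},\{c\}$ and $ab$ of the path $G'$ have linearly independent $h_t$, so $F_e$ is a $3$-simplex. Dually, $\{\p_e,\p_{e,c}\}$ is an edge of $\Cos_{G'}$, because condition (1) of \Cref{t:CosmoFaces} forces nothing at a vertex of degree one.

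The paper's reduction, which adds $v$ to $G$ as an isolated vertex, fails for the same reason. Cases 2 and 4 of its construction can then never occur at $v$, so no face containing $\p_{e,v}$ is hit. The statement should therefore assume $v,w\in V$ with $G$ free of isolated vertices. Under that hypothesis your dual argument is a complete proof.
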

We want to emphasize that the edge $e$ that is added to $G$ in the above statement might have a vertex (or even both vertices) not belonging to $V$.

\begin{proof}
Using the well-known duality between the face lattice of $\Cos_G$ and $\DuCos_G$ (see \Cref{r:faceDual}) and the interpretation of the faces of the vertex figure, it is enough to prove that there is an order-preserving bijection between the faces of $\Cos_G$ and the faces of $\Cos_{G'}$ which contain the vertex $\p_e$.
    
   Since $\Cos_G$ is not changed when we add isolated vertices to $G$, we can assume that $v,w\in V$.
     Let $F$ be a facet of $\Cos_G$. We will now define a facet of $\Cos_{G'}$ containing $e$. To do so, we consider different cases.

     {\sf Case 1:} If  $\{\p_{f},\p_{f,v}\}\not\subseteq F$ for any  edge $f=vx\in E$ and $\{\p_{g},\p_{g,w}\}\not \subsetneq F$ for any $g=wy\in E$, we set $F'=\conv(F\cup\{\p_e\})$. 

    {\sf Case 2:} If there exists $f=vx\in E$ such that $\{\p_{f},\p_{f,v}\}\subseteq F$ and $\{\p_{g},\p_{g,w}\}\not \subsetneq F$ for any $g=wy\in E$, then we set $F'=\conv( F\cup \{\p_e, \p_{e,v}\})$. Note that in this case, it follows from \Cref{t:CosmoFaces} (1), that $\{\p_{h},\p_{h,v}\}\subseteq F$ for all $h=vz\in E$.

    {\sf Case 3:} If $\{\p_{f},\p_{f,v}\}\not\subseteq F$ for any  edge $f=vx\in E$ and there exists $g=ey\in E$ with $\{\p_{g},\p_{g,w}\} \subseteq F$, then we set $F'=\conv( F\cup \{\p_e, \p_{e,w}\})$. Note that in this case, it follows from \Cref{t:CosmoFaces} (1), that $\{\p_{k},\p_{k,v}\}\subseteq F$ for all $k=wz\in E$. 

    {\sf Case 4: }If there exists $f=vx\in E$ such that $\{\p_{f},\p_{f,v}\}\subseteq F$ and there exists $g=ey\in E$ with $\{\p_{g},\p_{g,w}\} \subseteq F$, then we set $F'=\conv( F\cup \{\p_e,\p_{e,v}, \p_{e,w}\})$. Again \Cref{t:CosmoFaces} (1) implies that we have $\{\p_{h},\p_{h,v}\}\subseteq F$ for all $h=vz\in E$ and $\{\p_{k},\p_{k,v}\}\subseteq F$ for all $k=wz\in E$ in this case.

    To show that $F'$ is a face of $\Cos_{G'}$ in all four cases, we need to verify that conditions (1) and (2) from \Cref{t:CosmoFaces} are satisfied. This is clear as long as $v,w$ and $e$ are not involved. Condition (1) is also clear around  vertex $v$ or $w$, by definition of $F'$ 

    For condition (2),  let $C= e_1=vu_1,\dots, e_{k-1}=u_kw,e=vw$ be a cycle in $G'$ that contains $e$ and assume that 
    $\p_{e_1,v},\dots,\p_{e_{k+1},u_k},\p_{e,w}\in F'$. Then, by construction, $F'$ also contains $\p_e$ and therefore by \Cref{t:CosmoFaces} (1) we know that it also contains $\p_{e_{k+1}}$ and $\p_{e_{k+1},w}$. 
    As $F'$ contains $\p_{e_{k+1}}$ and $\p_{e_{k+1},u_k}$ it also contains $\p_{e_k}$ and $\p_{e_k,u_{k-1}}$, which then inductively also yields that it must contain $\p_{e_i,u_{i-1}}$ for $1\leq i\leq k$ and lastly also $\p_{e,v}$, which proves that \cref{t:CosmoFaces} (2) is also satisfied. 

We need to compute the dimension of $F'$. Since, we always have that $F\subseteq \{(x,y)\in \R^{|V|+|E|}~|~y_e=1\}$ but $\p_e\notin \{(x,y)\in \R^{|V|+|E|}~|~y_e=1\}$, it follows that $\dim F+1\leq \dim F'\leq \dim F+2$. Since in the first case, we only add one vertex, it is clear that $\dim F'=\dim F+1$.  In the second case, $F'$ contains the vertices $\p_e$ and $\p_{e, v}$, and then also needs to contain $\p_{f},\p_{f,v}$ for some $f=vx\in E$. Since we have the affine dependency $\p_{e,v}=\p_{f,v}+\p_{f}-\p_{e}$, we see that $\dim(F')=\dim(F)+1$. The same argument, also applied to $w$, yields that $\dim(F')=\dim(F)+1$ in Case 3 and 4.

It is clear from the definition that this map from the set of faces of $\Cos_G$ to the set of faces of $\Cos_{G'}$ that contain $\p_e$ is order-preserving and moreover, that it can be inversed. To see that the inverse map is well-defined it is enough to observe that  both conditions in \Cref{t:CosmoFaces} are closed under taking subgraphs. The claim follows.
\end{proof}

Applying \Cref{t:subgraphs_cosmological} inductively yields the following corollary. 

\begin{corollary}
    Let $G$ be a graph and $H\subseteq G$ be a subgraph. Then there exists a face of $\DuCos_G$ that is combinatorially equivalent to $\DuCos_H$.
\end{corollary}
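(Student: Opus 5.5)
The plan is to induct on the number of edges of $G$ not in $H$, applying \Cref{t:subgraphs_cosmological} one edge at a time. The claim is transitive: a face of a face of a polytope is again a face, and combinatorial equivalence preserves faces and composes. Write $H=(V_H,E_H)\subseteq G=(V,E)$. We may assume $H$ has no isolated vertices, or, as in the proof of \Cref{t:subgraphs_cosmological}, ignore isolated vertices, since adding or removing them does not change $\Cos$. Order the edges of $E\setminus E_H$ as $e_1,\dots,e_k$ and set
\[
G_0=H,\qquad G_i=G_{i-1}\cup\{e_i\}\quad (1\le i\le k),
\]
where adding $e_i$ also adds its endpoints if they are not already present. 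Then $G_k$ coincides with $G$ up to isolated vertices.

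For each $i$, \Cref{t:subgraphs_cosmological} applied to $G_{i-1}$ and the new edge $e_i\notin E(G_{i-1})$ gives a facet $F_{e_i}$ of $\DuCos_{G_i}$ that is combinatorially equivalent to $\DuCos_{G_{i-1}}$. The theorem explicitly allows the endpoints of $e_i$ to lie outside the vertex set, so this applies. Let $\phi_i$ be a face-lattice isomorphism from $\DuCos_{G_{i-1}}$ onto the face lattice of $F_{e_i}$, which is the interval below $F_{e_i}$ in the face lattice of $\DuCos_{G_i}$.

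Now induct downward. By the inductive hypothesis, $\DuCos_H$ is combinatorially equivalent to some face $K$ of $\DuCos_{G_{i-1}}$. Then $\phi_i(K)$ is a face of $F_{e_i}$, hence a face of $\DuCos_{G_i}$. Its face lattice is the image under $\phi_i$ of the face lattice of $K$, so $\phi_i(K)$ is combinatorially equivalent to $K$ and therefore to $\DuCos_H$. After $k$ steps this yields a face of $\DuCos_G$ equivalent to $\DuCos_H$.

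The case $k=0$, where $H=G$ up to isolated vertices, is trivial: take the whole polytope as the face. Dually, the statement says that $\Cos_H$ is equivalent to an iterated vertex figure of $\Cos_G$, namely a face figure at the face spanned by the points $\p_{e_i}$.

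The main obstacle is the bookkeeping with isolated vertices. The definition of $\Cos_G$ requires $G$ to have no isolated vertices, but an intermediate $G_i$ may acquire some. The way around this is to order the edges so that each $G_i$ is a union of $H$ with edges, every vertex of $G_i$ is covered by an edge, and only vertices of $G$ incident to the $e_j$ ever appear. Alternatively, one can use the convention already adopted in the proof of \Cref{t:subgraphs_cosmological} that isolated vertices do not affect $\Cos$. In either case the edge $e_i$ appended at each step is new to $G_{i-1}$, so the hypothesis of \Cref{t:subgraphs_cosmological} holds.
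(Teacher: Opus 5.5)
Your proposal is correct and follows the paper's own argument: the paper obtains the corollary by applying \Cref{t:subgraphs_cosmological} inductively, adding one edge of $G$ not in $H$ at a time. Your write-up also spells out details the paper leaves implicit, namely that a face of the facet $F_{e_i}$ is a face of $\DuCos_{G_i}$, that the face-lattice isomorphisms compose, and how isolated vertices are handled.
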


\begin{remark}[Connection to graph Associahedra]    
From the proof of \Cref{t:tubingTrig} one can also extract that two simplices in the triangulation of $\DuCos_G$ share a facet if and only if their tubings differ by exactly two tubes, i.e., one can construct one tubing from the other by simply removing one tube and replacing it by a different (compatible) one. 
On the other hand, the graph associahedron of $G$ is the simple polytope, whose vertices correspond to maximal GA-tubings of $G$, where two vertices are connected by an edge if and only their GA-tubings differ by exactly two tubes. 
This means that the $1$-skeleton of the graph associahedron of $G$ is a subgraph of the facet-ridge graph of the triangulation of $\DuCos_G$ from \Cref{t:tubingTrig}. 
\end{remark}

\section{Computing the canonical form}\label{section_comp}
The goal of this section is to apply our results from the previous sections to compute the canonical form of a cosmological polytope. We will provide two different representation: Whereas the first one is well-known at least in the cosmology community, we have not seen the second one before.

Given a graph $G=(V,E)$, we write $\mathcal{T}(G)$ for its set of maximal tubings. Moreover, for a tube $t\in\mathcal{Z}(G)$ and $x\in \mathbb{P}^{|E|+|V|-1}$,  we set $x_t\coloneqq  \langle x, h_t \rangle$. 

With these notations, we can provide the first representation of the canonical form, which also has been computed in \cite{arkani17} by different means:

\begin{theorem}  \label{canonicalform}
    Let $G=(V,E)$ be a graph. The canonical form of the cosmological polytope $\Cos_G$ is, up to a scalar, given by 
\begin{equation}
\Omega(\Cos_G)(x) = \sum_{T \in \mathcal T(G)} \frac{\operatorname{vol}(\sigma_T)}{\displaystyle \prod_{t \in T} x_t}\operatorname d(x) \label{can},
\end{equation}
where $\sigma_T=\conv(z_t~|~t\in T)$ is the simplex whose vertices correspond to the tubes in $T$. 
\end{theorem}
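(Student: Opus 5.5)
We combine \Cref{canonical}, the triangulation of \Cref{t:tubingTrig}, and \Cref{lemma:dual}. Fix $x$ in the relative interior of $\Cos_G$ inside $H$; both sides of \eqref{can} are rational in $x$, so it suffices to prove the identity on this open set. By \Cref{canonical}, applied in $H$ with the vector space structure of \Cref{thm coordinates}, we have $\Omega(\Cos_G)(x)=\operatorname{vol}((\Cos_G-x)^\circ)\,\mathrm d(x)$. The plan is to triangulate $(\Cos_G-x)^\circ$ by the simplices $\sigma_T$, translated to the shifted setting, and to sum their volumes using \Cref{lemma:dual}.

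The first step is to show that the triangulation survives the shift. The polar of $\Cos_G-x$ is obtained from $\DuCos_G$ by rescaling each vertex: exactly as in the proof of \Cref{lemma:dual}, the facet inequality $\langle y,z_t\rangle\ge -1$ of $\Cos_G$ becomes $\langle y,(1+\langle z_t,x\rangle)^{-1}z_t\rangle\ge-1$ for $\Cos_G-x$. Hence the vertices of $(\Cos_G-x)^\circ$ are $z_t/\ell_{z_t}(x)$ with positive scalars $\ell_{z_t}(x)$. Rescaling vertices by positive scalars preserves the rays from the origin, and the origin is interior to both polars. The fan of cones over the simplices $\sigma_T$ is therefore the same fan, because the boundary of $\DuCos_G$ is covered by the boundary simplices of the triangulation. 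I expect this to be the main obstacle. I would argue it by noting that the triangulation of \Cref{t:tubingTrig} restricts to the boundary, so coning from the origin gives a triangulation of the polar polytope for any positive rescaling that still yields a convex polytope with the same face structure. Alternatively, one can argue that the combinatorial type of a triangulation of the dual is translation invariant, since the normal fan is unchanged. Either way, $\{\operatorname{conv}(z_t/\ell_{z_t}(x)\mid t\in T)\}_{T\in\mathcal T(G)}$ triangulates $(\Cos_G-x)^\circ$.

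The second step applies \Cref{lemma:dual} to each piece. Each $\sigma_T$, as a simplex in $H$, is the polar of the simplex cut out by the inequalities indexed by $T$. The determinant computation in that lemma shows that the rescaled simplex has volume $\operatorname{vol}(\sigma_T)/\prod_{t\in T}\ell_{z_t}(x)$. That computation applies verbatim to any simplex with rescaled vertices, not only to full duals. Summing over $T\in\mathcal T(G)$ gives the volume of $(\Cos_G-x)^\circ$.

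Finally we identify the denominators. In the projectivized coordinates on $H$ we have $\ell_{z_t}(x)=1+\langle z_t,x\rangle$, which corresponds to $\langle x,h_t\rangle/|h_t|_1=x_t/|h_t|_1$, up to the homogenization fixed by $|x|_1=1$ and the choice of origin. Thus $\prod_{t\in T}\ell_{z_t}(x)=\prod_{t\in T}x_t$ times a constant $\prod_{t\in T}|h_t|_1^{-1}$. This constant depends on $T$, so it is absorbed into the normalization of $\operatorname{vol}(\sigma_T)$, which is computed in the unnormalized coordinates. The overall scalar coming from the choice of origin and of volume form on $H$ is common to all terms, which gives \eqref{can} up to a scalar.
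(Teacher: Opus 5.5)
Your route is the paper's: \Cref{canonical}, transport of the triangulation of \Cref{t:tubingTrig} to $(\Cos_G-x)^\circ$, \Cref{lemma:dual} on each simplex, then replacing $\ell_t$ by $x_t$. However, two steps fail as you argue them.

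\emph{Transporting the triangulation.} Restricting to the boundary and coning from the origin gives simplices $\conv(\{\mathbf{0}\}\cup\cdots)$ over boundary faces. That is the triangulation behind \Cref{t:canonicalBoundary}, not the one by the $\sigma_T$, $T\in\mathcal T(G)$. The $\sigma_T$ are full-dimensional, do not have the origin as a vertex, and their cones from the origin do not form a fan, since some $\sigma_T$ contains the origin. Your alternative, ``the normal fan is unchanged'', only shows that $\DuCos_G$ and $(\Cos_G-x)^\circ$ are combinatorially equivalent, and the triangulations of a polytope are not determined by its face lattice. The missing idea is a projective transformation. Let $p$ be $x$ in the coordinates on $H$ used for the polar, and let $v_t$ be the vertices of $\DuCos_G$ there. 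Then $y\mapsto y/(1+\langle y,p\rangle)$ has positive denominator on $\DuCos_G$, maps $\DuCos_G$ bijectively onto $(\Cos_G-x)^\circ$, and sends $v_t$ to $v_t/\ell_t(x)$. Being admissible, it preserves segments, convex hulls and intersections, so it carries the triangulation of \Cref{t:tubingTrig} to one of $(\Cos_G-x)^\circ$. The paper only asserts this step. A smaller point: $\sigma_T$ is generally not the polar of the region cut out by the inequalities indexed by $T$, since that region is unbounded unless the origin is interior to $\sigma_T$. The determinant identity of \Cref{lemma:dual} still holds for every $\sigma_T$, but only because all factors have the form $1+\langle v_t,p\rangle$ with the same $p$.

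\emph{The final normalization.} This is the more serious problem. You correctly find $\ell_t=n\,x_t/|h_t|_1$, with $n=|V|+|E|$ and origin $\frac1n\mathbf 1$. So your argument actually proves
\[
\Omega(\Cos_G)(x)=c\sum_{T\in\mathcal T(G)}\frac{\operatorname{vol}(\sigma_T)\prod_{t\in T}|h_t|_1}{\prod_{t\in T}x_t}\,\mathrm{d}(x).
\]
A $T$-dependent factor cannot be absorbed into ``up to a scalar''. Since the statement fixes $\sigma_T=\conv(z_t\mid t\in T)$ with the normalized $z_t$, computing volumes ``in unnormalized coordinates'' changes the statement rather than proving it.

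The factor genuinely varies. Take the path $v_1v_2v_3v_4$ with edges $e_1,e_2,e_3$. Every maximal tubing consists of the singletons, $G$, and one of the pairs of tubes with edge sets $\{e_1\},\{e_3\}$; $\{e_1\},\{e_1,e_2\}$; $\{e_2\},\{e_1,e_2\}$; $\{e_2\},\{e_2,e_3\}$; $\{e_3\},\{e_2,e_3\}$. These pairs contribute $|h_t|_1$-products $9,12,16,16,12$. The $h_t$ of the tubes $\{e_1\},\{e_2\},\{e_3\},\{e_1,e_2\},\{e_2,e_3\}$ are linearly independent, so the five functions $1/\prod_{t\in T}x_t$ are linearly independent.

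Hence the formula, read literally with $x_t=\langle x,h_t\rangle$, is not correct up to a global scalar. The paper's proof has the same gap: it swaps $\ell_t$ for $x_t$ by uniqueness ``up to a multiple'' without noting that the multiple depends on $t$. The correct conclusion of your argument has $\langle x,z_t\rangle$ in the denominators. Equivalently, replace $\operatorname{vol}(\sigma_T)$ by a constant multiple of $|\det(h_t)_{t\in T}|$. That determinant is $2^{|E|}$ for every maximal tubing. A Schur complement on the singleton rows leaves $-2$ times the $0/1$ matrix $\bigl([e\in E(t)]\bigr)$ over the non-singleton tubes. This matrix is unitriangular once each edge is indexed by the tube introducing it (\Cref{l:properties_max_tubing}(3)). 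So $\Omega(\Cos_G)$ is a constant multiple of $\sum_{T\in\mathcal T(G)}1/\prod_{t\in T}x_t$.
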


\begin{proof}
By \Cref{canonical}, the canonical form of $\Cos_G$ is given by
$$ \Omega(\Cos_G)(x) = \operatorname{vol}(\Cos_G-x)^\circ\operatorname d(x).$$ 
We triangulate $\DuCos_G$ via the triangulation given in \Cref{t:tubingTrig} and use valuativity to write this as 
$$ \displaystyle \Omega(\Cos_G)(x) =\sum_{T \in \mathcal T} \operatorname{vol}(\sigma_T(x))\operatorname d(x),$$
where the simplices $\sigma_T(x)$ are just the simplices of the induced triangulation on the polar dual of the shifted cosmological polytope. We saw in  \Cref{lemma:dual} that 
$$\operatorname{vol}(\sigma_T(x))= \frac{\operatorname{vol}(\sigma_T)}{\displaystyle\prod_{t  \in T}\ell_t(x)},$$ where $\ell_t(x)$ is the form $\ell_v(x)$ in the mentioned lemma for the vertex $v$ of the dual corresponding to the tube $t$. 
Summing this up we obtain
$$\Omega(\Cos_G)(x) = \sum_{T \in \mathcal T}\frac{\operatorname{vol}(\sigma_T)}{\displaystyle\prod_{t \in T} \ell_t(x)}\operatorname d(x).$$

This is exactly the expression \eqref{can} in the claim but lifted to the projective setting. The forms $x_t$ are projective linear forms vanishing on the facet corresponding to the tube $t$ and the form $\ell_t(x)$ is the affine linear form vanishing on the corresponding facet. Such a form is unique in both the real and the projective setting and therefore the claim follows. \end{proof} 

\remark[Including the adjoint]
\Cref{canonicalform} can also be proven using the adjoint. 
We saw earlier that 
\begin{equation}\label{canonCosmo}
    \Omega( \Cos_G)(x)= \frac{\operatorname{adj}(\Cos_G^\circ)(x)}{ \displaystyle\prod_{F \in \mathcal F} \ell_F(x)}\operatorname{d}(x),
\end{equation}
where the product ranges over the set of facets $\mathcal F$ of $\Cos_G$ and the forms $\ell_F$ are the corresponding linear forms vanishing on the affine hulls of the facets. Using the definition of the adjoint (see \Cref{sect:adjoint})
and the triangulation of $\DuCos_G$ from \Cref{t:tubingTrig} we conclude
\begin{equation}\label{adjointCosmo}
\operatorname{adj}(\DuCos_G) = \displaystyle \sum_{T \in \mathcal T(G)} \operatorname{vol}(\sigma_T) \prod_{t \in\mathcal{Z}(G)\setminus  T} \ell_t(x).\end{equation}
Combining \eqref{canonCosmo} and \eqref{adjointCosmo} also yields the desired formula.

\subsection{Computing the canonical form via the boundary complex}
We will now provide a second way to compute the canonical form of a cosmological polytope. The idea is to modify the triangulation from \Cref{t:tubingTrig} slightly and to compute the volume of the polar dual of the shifted cosmological polytope via this new triangulation.
More precisely, we will allow the simplices in this new triangulation to use an additional vertex that is not a vertex of $\DuCos$. 

This idea is formulated in the following well-known lemma which is straightforward to prove.

\begin{lemma}\label{lemma:boundary}
    Let $P\subseteq \R^d$ be a polytope and let $p$ be a point in the interior or $P$. If  $\mathcal{S}$ is a  triangulation of the boundary of $P$, then
    \[
    \mathcal{S}\ast p\coloneqq\left(\conv(\sigma\cup \{p\})~|~\sigma\in \mathcal{S}\right)
    \]
    is  a triangulation of $P$.
    \end{lemma}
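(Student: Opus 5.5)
To prove \Cref{lemma:boundary}, I would check the defining properties of a triangulation directly: each $\conv(\sigma\cup\{p\})$ is a full-dimensional simplex, these simplices cover $P$, and any two of them meet in a common face.

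First, the simplices. Let $\sigma\in\mathcal{S}$ be a maximal simplex, so $\dim\sigma=d-1$. Then $\|\sigma\|$ lies in some facet $F$ of $P$, and hence in the supporting hyperplane $\operatorname{aff}(F)$. Since $p$ is an interior point, $p\notin\operatorname{aff}(F)$. Therefore $\sigma\cup\{p\}$ is affinely independent and $\conv(\sigma\cup\{p\})$ is a $d$-simplex. Lower-dimensional faces of $\mathcal{S}$ give faces of these simplices, so the collection is closed under taking faces.

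Second, covering. Take $q\in P$ with $q\neq p$. The ray from $p$ through $q$ leaves the compact convex set $P$ at a unique boundary point $b$, and $q\in[p,b]$. Because $\mathcal{S}$ triangulates $\partial P$, the point $b$ lies in some $\|\sigma\|$, and then $q\in\conv(\sigma\cup\{p\})$.

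Third, intersections. This is the step needing the most care. Every point $q\neq p$ of $P$ can be written uniquely as $q=p+\lambda(b-p)$ with $b\in\partial P$ and $\lambda\in(0,1]$; this is radial projection from an interior point. The map $q\mapsto b$ sends $\conv(\sigma\cup\{p\})\setminus\{p\}$ onto $\|\sigma\|$. So if $q$ lies in both $\conv(\sigma\cup\{p\})$ and $\conv(\tau\cup\{p\})$, then $b\in\|\sigma\|\cap\|\tau\|=\|\sigma\cap\tau\|$, and hence $q\in\conv((\sigma\cap\tau)\cup\{p\})$. This shows
\[
\conv(\sigma\cup\{p\})\cap\conv(\tau\cup\{p\})=\conv\bigl((\sigma\cap\tau)\cup\{p\}\bigr),
\]
which is a common face of both simplices.

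The only subtle point is the uniqueness of $b$. It holds because $p$ is interior: each ray from $p$ meets $\partial P$ exactly once. This is precisely where the interiority assumption on $p$ is needed.
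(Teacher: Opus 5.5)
Your proof is correct. The paper gives no proof of this lemma; it only calls it well-known and straightforward, so there is no argument to compare against. Your radial-projection argument is the standard proof one would supply:
\begin{itemize}
\item each cone over a simplex is full-dimensional because $p$ lies off every facet hyperplane;
\item the cones cover $P$ via the exit point $b$ of the ray from $p$ through $q$;
\item two cones meet in $\conv\bigl((\sigma\cap\tau)\cup\{p\}\bigr)$ because $b$ is unique.
\end{itemize}

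Two facts you use without comment deserve a line each in a write-up. First, each $\|\sigma\|$ lies in a single facet of $P$: it is a convex subset of $\partial P$, so a supporting hyperplane of $P$ at a relative interior point of $\|\sigma\|$ must contain all of $\|\sigma\|$. Second, a ray from $p$ meets $\partial P$ only once, because $[p,y)\subseteq\operatorname{int}P$ for every $y\in P$ when $p\in\operatorname{int}P$.

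In the paper's application the first point is automatic anyway. The boundary triangulation of $\DuCos_G$ there comes from restricting the triangulation of \Cref{t:tubingTrig}, whose boundary ridges visibly lie in facets.
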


We start by describing the induced triangulation of the boundary of $\DuCos_G$ when restricting the triangulation of $\DuCos_G$ from \Cref{t:tubingTrig} to the boundary of $\DuCos_G$. We call a tubing $T$ of a graph $G=(V,E)$ \emph{almost maximal} if $|T|=|V|+|E|-1$. Moreover, an almost maximal tubing is called \emph{uniquely completable} if it was obtained from a maximal tubing by removing either a singleton or the graph $G$. Note that, by definition, uniquely completable, almost maximal tubings are exactly the almost maximal tubings that are contained in a unique maximal tubing.

\begin{lemma}\label{l:newTriang}
    Let $G=(V,E)$ be a graph. Let $\widetilde{\mathcal{T}}(G)$ be the set of uniquely completable, almost maximal tubings of $G$. Then 
    \[
     \{\sigma_T~|~T\in \widetilde{\mathcal{T}}(G)\},
    \]
    where $\sigma_T\coloneqq\conv(z_t~|~t\in T)$ is a triangulation of the boundary of $\DuCos_G$.
\end{lemma}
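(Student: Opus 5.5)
The plan is to use the triangulation $\{\sigma_T~|~T\in\mathcal{T}(G)\}$ of $\DuCos_G$ from \Cref{t:tubingTrig} together with a standard fact. If $\Delta$ triangulates a polytope $P$ using only vertices of $P$, then the ridges of $\Delta$ whose geometric realizations lie in $\partial P$ form a triangulation of $\partial P$. Indeed, each facet $F$ of $P$ is triangulated by the faces of $\Delta$ contained in $F$; this is the Claim in the proof of \Cref{l:trig2Facets}. Moreover, $\partial P$ is the union of these facets, and intersections are inherited from $\Delta$. So it suffices to show the following: a ridge $\sigma_T\setminus\{z_t\}$ of $\sigma_T$, with $T\in\mathcal{T}(G)$ and $t\in T$, lies in $\partial\DuCos_G$ if and only if $t$ is a singleton or $t=G$. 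Put differently, the boundary ridges are exactly the sets $\sigma_{T'}$ with $T'\in\widetilde{\mathcal{T}}(G)$.

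The first direction is already done in the proof of \Cref{t:tubingTrig}. In Case 1 ($t$ a singleton) and Case 2 ($t=G$), the ridge $R$ was shown to lie in a facet $F_{ew}$, respectively $F_e$, of $\DuCos_G$ (see \Cref{r:faceDual}). For the converse, take $t$ neither a singleton nor $G$. Case 3 of that proof produces a second maximal tubing $T_j=(T\setminus\{t\})\cup\{t'\}$ with $T_j\neq T$ and $R\subseteq\sigma_j$. The simplices $\sigma_T$ and $\sigma_j$ are full-dimensional and lie on opposite sides of the separating hyperplane $H_{i,j}$, so $R$ is an interior ridge. To make this airtight, I would note that a ridge lying in a facet of $P$ is contained in exactly one full-dimensional simplex of a triangulation. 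This holds because points just off $\|R\|$ on the outer side are not in $P$.

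Next I would check that the list has no duplicates. Distinct $T'\in\widetilde{\mathcal{T}}(G)$ give distinct simplices, because the $z_t$ are distinct vertices (\Cref{thm coordinates}) and $\sigma_T$ is a nondegenerate simplex. Each such $T'$ arises from a unique maximal tubing; this is the remark that uniquely completable tubings are exactly those contained in a unique maximal tubing. Here one should double-check that removing a singleton or $G$ from a maximal tubing really leaves an almost maximal tubing with a unique completion. For $G$ this is immediate: $G$ is the only tube nested with or disjoint from everything, which needs $G$ connected, as implicitly assumed. For a singleton $\{v\}$, the only tube that can be added back is one with vertex set $\{v\}$, namely the singleton itself.

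The main obstacle is the converse direction: showing that interior ridges (Case 3) genuinely do not lie in the boundary, and that the almost maximal tubings obtained by removing a non-singleton, non-$G$ tube are not uniquely completable. Both follow from the existence of the exchange tube $t'\neq t$ constructed in Case 3. So the key step is to re-verify carefully that $t'$ is a tube distinct from $t$ and compatible with $T\setminus\{t\}$; I would redo that construction explicitly, using \Cref{l:properties_max_tubing}.
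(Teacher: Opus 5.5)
Your proposal is correct and takes essentially the paper's route: restrict the triangulation of Theorem~\ref{t:tubingTrig} to $\partial\DuCos_G$ and identify the boundary ridges as those obtained by deleting a singleton or $G$ from a maximal tubing; you spell out, via Cases 1--3 of that theorem's proof, what the paper's three-line proof leaves implicit. Two small points for the write-up: in Case 3, if $\hat t=t+f$ with $f$ joining the two predecessors $s_1,s_2$ of $t$, the paper's recipe for $t'$ breaks, and the correct exchange tube is $t'=s_1\cup s_2\cup\{f\}$; also, your sentence ``$G$ is the only tube nested with or disjoint from everything'' is false, since singletons share this property, and the clean argument for unique completability is that all singletons and $G$ lie in every maximal tubing.
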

\begin{proof}
Let $\mathcal{S}(G)$ be the triangulation of $\DuCos_G$ from \Cref{t:tubingTrig}. By definition, every simplex in $\mathcal{S}(G)$ contains the vertices of $\DuCos_G$ corresponding to the singletons in $G$ and the graph $G$ itself.
 The ridges of the simplices in $\mathcal{S}(G)$ correspond to almost maximal tubings. Moreover, by the discussion preceding this lemma, ridges lying  on the boundary of $\DuCos_G$ correspond to uniquely completable, almost maximal tubings of $G$. The claim follows.
\end{proof}
 
\begin{example}
    Let $G$ be the path of length two. 
    
\begin{figure}[H]
\begin{center}
\begin{tikzpicture}[scale=0.7]

\fill (0,0) circle (1.5pt) node[below] {$v_1$};
\fill (2,0) circle (1.5pt) node[below] {$v_2$};
\fill (4,0) circle (1.5pt) node[below] {$v_3$};

\draw (0,0) -- (2,0) node[midway, above] {$e_1$};
\draw (2,0) -- (4,0) node[midway, above] {$e_2$};

\end{tikzpicture}
\end{center}
\end{figure}

There are two maximal tubings of the graph given by choosing the singletons, the full graph and additionally either the edge $e_1$ or the edge $e_2$. Let $\sigma_1$ and $\sigma_2$ be  the corresponding maximal simplices in the triangulation of $\DuCos_G$. The $5$ facets of $\sigma_1$ are given by the following $3$-dimensional simplices

\begin{enumerate}
    \item[(1)] $\operatorname{conv}\left (\{z_{v_1}, z_{v_2}, z_{v_3}, z_{G}\}\right)$
    \item[(2)] $\operatorname{conv}\left (\{z_{v_1}, z_{v_2}, z_{v_3}, z_{e_1}\}\right)$
    \item[(3)] $\operatorname{conv}\left (\{z_{v_1}, z_{v_2}, z_{e_1}, z_{G}\}\right)$
    \item[(4)] $\operatorname{conv}\left (\{z_{v_1}, z_{e_1}, z_{v_3}, z_{G}\}\right)$
    \item[(5)] $\operatorname{conv}\left (\{z_{e_1}, z_{v_2}, z_{v_3}, z_{G}\}\right)$
\end{enumerate}
The first simplex is in the intersection with $\sigma_2$. The other simplices are on the boundary of $\DuCos_G$. For the other simplex $\sigma_2$ we get the same collection of simplices again, but where we replace $z_{e_1}$ by $z_{e_2}$. Therefore, the boundary of $\DuCos_G$ is triangulated by eight simplices. Coning over an interior point of $\DuCos_G$ (see \Cref{lemma:boundary}) produces a triangulation of  $\DuCos_G$ with eight simplices. 
\end{example}

Using  \Cref{l:newTriang} to compute the canonical form of the the cosmological polytope $\Cos_G$ of a graph $G$, we obtain the following expression for $\Omega(\Cos_G)$:

\begin{theorem}\label{t:canonicalBoundary}
Let $G=(V,E)$ be a graph. 
Then,
    $$\Omega (\Cos_G)(x) =\displaystyle \sum_{ T \in \widetilde{\mathcal T}(G)}\frac{\operatorname{vol}{\sigma_{T}}}{\displaystyle \prod_{t \in T}x_t }\operatorname{d}(x),
    $$
    where $\sigma_T=\conv( \{\frac{1}{|V|+|E|}\cdot\mathbf{1}\}\cup\{z_t~|~t\in T\})$ for $T\in \widetilde{\mathcal{T}}(G)$.
\end{theorem}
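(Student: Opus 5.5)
We apply \Cref{lemma:boundary} with the point $p=\frac{1}{|V|+|E|}\cdot\mathbf{1}$ to the boundary triangulation from \Cref{l:newTriang}, and then repeat the argument from the proof of \Cref{canonicalform} for the new triangulation. The plan has three steps.

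\textbf{Step 1: $p$ is an interior point of $\DuCos_G$.} First, $p\in H$. By \Cref{thm coordinates} and \Cref{r:faceDual}, the facets of $\DuCos_G$ are cut out by the vertices $\p_e,\p_{e,v},\p_{e,w}$ of $\Cos_G$. Each of these vertices has coordinate sum $1$, so
\[
\langle \mathbf{1},\p\rangle=1>0
\]
for every vertex $\p$ of $\Cos_G$. Hence $\mathbf{1}$ lies strictly inside every facet inequality of the cone $C_T$, and $p$ is in the interior of $\DuCos_G$. By \Cref{lemma:boundary} and \Cref{l:newTriang}, the simplices $\sigma_T$, $T\in\widetilde{\mathcal T}(G)$, form a triangulation of $\DuCos_G$. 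Each is full-dimensional, with $|T|+1=|V|+|E|$ vertices.

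\textbf{Step 2: transport the triangulation to the shifted dual and use \Cref{lemma:dual}.} As in the proof of \Cref{canonicalform}, we work at the level of cones. The polytope $(\Cos_G-x)^\circ$ is a slice of the same cone $C_T=\operatorname{Cone}(\Cos_G)^\circ$ by a different affine hyperplane. Its vertices and the point corresponding to $p$ are rescalings of the rays $h_t$ and $\mathbf{1}$. So the cones over the simplices $\sigma_T$ induce a triangulation of $(\Cos_G-x)^\circ$ of the same combinatorial type, with one simplex $\sigma_T(x)$ for each $T$. By valuativity and \Cref{canonical},
\[
\Omega(\Cos_G)(x)=\sum_{T\in\widetilde{\mathcal T}(G)}\operatorname{vol}(\sigma_T(x))\,\mathrm d(x).
\]
\Cref{lemma:dual}, applied inside $H$ to each simplex after translating so that the origin is interior, gives
\[
\operatorname{vol}(\sigma_T(x))=\frac{\operatorname{vol}(\sigma_T)}{\ell_p(x)\prod_{t\in T}\ell_t(x)}.
\]
Lifting projectively, each $\ell_t$ becomes $x_t=\langle x,h_t\rangle$, exactly as in \Cref{canonicalform}. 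The extra factor $\ell_p$ becomes $\langle x,\mathbf{1}\rangle$, which is the linear form defining $H$. It equals $1$ on the affine chart $H$ (it is the hyperplane at infinity), so it is constant and can be absorbed. This yields the claimed formula.

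\textbf{Main obstacle.} The delicate point is Step 2. \Cref{lemma:dual} is stated for simplices whose vertices are vertices of the dual, with the $\ell_{v_i}$ vanishing on facets of the primal. Here the vertex $p$ is not a vertex of $\DuCos_G$, so one must check that the lemma really only uses that the simplices are cut from a fixed cone by varying hyperplanes. Equivalently, $\sigma_T^\circ$ is the simplex bounded by the facet hyperplanes $x_t=0$ for $t\in T$ together with the hyperplane dual to $p$. I would verify this directly via the determinant computation in the proof of \Cref{lemma:dual}, checking that the row corresponding to $p$ contributes the factor $\langle x,\mathbf{1}\rangle$. This holds identically on $H$, which justifies dropping it from the denominator.
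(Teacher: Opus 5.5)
Your proposal follows the paper's proof step for step. You cone the boundary triangulation of \Cref{l:newTriang} from $p=\frac{1}{|V|+|E|}\mathbf{1}$, pass to the induced triangulation of $(\Cos_G-x)^\circ$, and use the determinant identity behind \Cref{lemma:dual}; that identity indeed does not need the origin in the interior of $\sigma_T$, and the paper also applies it with $p$ as a vertex. You then note that the cone point contributes a trivial factor, and the paper's $\ell_{\mathbf 0}\equiv 1$ (with $p$ as the origin of $H$) is exactly your $\langle x,\mathbf{1}\rangle\equiv 1$ on $H$. Your check $\langle \mathbf{1},\p\rangle=1$ for all vertices $\p$ of $\Cos_G$ supplies the interiority of $p$, which the paper only asserts.

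One caveat, shared with the paper and with the proof of \Cref{canonicalform} that you invoke: the step ``each $\ell_t$ becomes $x_t$'' holds only up to a $t$-dependent factor. Since $\ell_t$ is normalized to equal $1$ at the origin $p$ of $H$, one has $\ell_t=\frac{|V|+|E|}{|h_t|_1}\,x_t$ on $H$. Strictly, the argument therefore yields numerators $\operatorname{vol}(\sigma_T)\prod_{t\in T}|h_t|_1$, or equivalently $x_t$ should be replaced by $\langle x,z_t\rangle$.
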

\begin{proof}
Let $\mathbf{1}\in\R^{|V|+|E|}$ be the all ones vector. We note that $\frac{1}{|V|+|E|}\cdot \mathbf{1}$ is in an interior point of $\DuCos_G$. Combining \Cref{l:newTriang} and \Cref{lemma:boundary}, we get that the dual cosmological polytope $\DuCos_G$ is triangulated by the set of simplices  $\{\sigma_{T}~|~T\in \widetilde{\mathcal{T}}(G)\}$ and this triangulation induces a triangulation of the dual of the shifted cosmological polytope.  We use once more \Cref{t:tubingTrig} to compute the canonical form of $\Cos_G$. To do so, we need to compute the volume of the simplices $\sigma_T$. For this, we look at their image under orthogonal projection to the linear subspace parallel to $H = \{(x,y)\in \mathbf{R}^{|V|+|E|}~|~x_1+x_2+ \cdots +x_{|V|}+ y_1+ \cdots + y_{|E|}=1 \}$. Note that the image of $\sigma_T$ is just given by the translation $\sigma_T-\frac{1}{|V|+|E|}\cdot \mathbf{1}$. Denoting by $\sigma_T(x)$ the corresponding simplex in the induced triangulation of $(\Cos_G-x)^\circ$ and applying \Cref{lemma:dual}, we get that
\[
\operatorname{vol}(\sigma_T(x))=\frac{\operatorname{vol}(\sigma_T)}{\prod_{t\in T}\ell_t(x)},
\]
where we use that the origin $\mathbf{0}$ is a vertex of $\sigma_T-\frac{1}{|V|+|E|}\cdot \mathbf{1}$ and $\ell_{\mathbf{0}}(x)=1$.
Summing over all tubings in $\widetilde{\mathcal{T}}(G)$ yields
\[
\Omega (\Cos_G)(x) =\displaystyle \sum_{ T \in \widetilde{\mathcal T}(G)}\frac{\operatorname{vol}{\sigma_{T}}}{\displaystyle \prod_{t \in T}\ell_t(x) }\operatorname{d}(x)
\]
The claim follows by the same argument as in the proof of \Cref{canonical}.
\end{proof}

\begin{remark}
    The advantage of the representation of the canonical form in \Cref{t:canonicalBoundary} compared to the expression in \Cref{canonicalform} is that the degree of the rational forms occurring as summands is one lower, the disadvantage is we have $|V|+1$ many summands. It is not clear to us what is more desirable from a physical perspective.
\end{remark}
\subsubsection*{Acknowledgement}
We thank Hadleigh Frost and Felix Lotter for helpful discussions concerning the results in \Cref{section_comp}. 

This work has been supported by the German Research Foundation (DFG) via SPP 2458 \textit{Combinatorial Synergies} and via project number 547295909.

\bibliography{bibligraphy.bib}
\bibliographystyle{plain}
\end{document}